%%%%%%%%%%%%%%%%%%%%%%%%%%%%%%%%%%%%%%%%%%%%%%%%%%%%%%%%%%%%%%%%%%%%%%%%
% Gaurav Bhatnagar
% Final Preprint, made by editing out Springer's stuff.
%%%%%%%%%%%%%%%%%%%%%%%%%%%%%%%%%%%%%%%%%%%%%w%%%%%%%%%%%%%%%%%%%%%%%%%%%

%\documentclass[envcountresetchap]{svmult} 
\documentclass[reqno]{amsart}
\pdfoutput=1
\usepackage{graphicx}
\usepackage{amssymb, amsmath, amsthm}%amsthm - added it
\usepackage[bookmarks=false]{hyperref}
\usepackage{mathptmx, helvet, courier, graphicx, multicol, footmisc} %makeidx removed

\allowdisplaybreaks[1] 

\newtheorem{Theorem}[equation]{Theorem}{\bfseries}{\itshape}
\newtheorem*{rem}{Remark} 
\newenvironment{Remark}{\begin{rem}\normalfont}{\end{rem}}

%\spnewtheorem{Theorem}[equation]{Theorem}{\bfseries}{\itshape}
%\spnewtheorem*{Remark}{Remark}{\bfseries}{\rmfamily}

\newcommand\sumk{{\left| \boldsymbol {k} \right|}}
\newcommand{\qrfac}[2]{{\left({#1}; q \right)_{#2}}} % not suppressing q
\newcommand{\pqrfac}[3]{{\left({#1};#3\right)_{#2}}}
\newcommand{\smallprod}[1]{\prod\limits_{r =1}^{#1}} % #1: dimension n or m or p
\newcommand{\multisum}[2]{\underset{r=1,2, \dots, #2}{\sum\limits_{#1_r\geq 0}}} 
	% #1: j or k (the index) #2: n or m (the dimension)
%\smartqed

\begin{document} 
\title[A bibasic Heine transformation]{A bibasic Heine transformation formula and Ramanujan's $_2\phi_1$ transformations}
%\titlerunning{A bibasic Heine transformation}
\author[Gaurav Bhatnagar]{Gaurav Bhatnagar*}
%\author{Gaurav Bhatnagar}
\thanks{Research supported in part by the Austrian Science Fund (FWF): F 5008-N15.}
\address{Fakult\"at f\"ur Mathematik,  Universit\"at Wien,
Oskar-Morgenstern-Platz 1, 1090 Wien, Austria.\\
}
\email{bhatnagarg@gmail.com}

\dedicatory{Dedicated to Krishnaswami Alladi on his 60th birthday}

\keywords{Heine transformation, bibasic and multibasic series, Ramanujan's theta functions, Lost Notebook, Heine's method, $q$-Lauricella function}
\subjclass[2010]{33D15, 33D65}

%\date{\today}

\begin{abstract}
We study Andrews and Berndt's organization of Ramanujan's  transformation formulas in Chapter 1 of their book {\em Ramanujan's Lost Notebook, Part II}.  In the process, we rediscover a bibasic Heine's transformation, which follows from a Fundamental Lemma given by Andrews in 1966, and obtain identities proximal to Ramanujan's entries.  We also provide a multibasic generalization of Andrews' 1972 theorem concerning a $q$-analogue of the Lauricella function. Our results only require the $q$-binomial theorem, and are an application of what Andrews and Berndt call \lq Heine's Method'. 
\end{abstract}

%
%\abstract{We study Andrews and Berndt's organization of Ramanujan's  transformation formulas in Chapter 1 of their book {\em Ramanujan's Lost Notebook, Part II}.  In the process, we rediscover a bibasic Heine's transformation, which follows from a Fundamental Lemma given by Andrews in 1966, and obtain identities proximal to Ramanujan's entries.  We also provide a multibasic generalization of Andrews' 1972 theorem concerning a $q$-analogue of the Lauricella function. Our results only require the $q$-binomial theorem, and are an application of what Andrews and Berndt call \lq Heine's Method'.  
%\keywords{Heine transformation, bibasic and multibasic series, Ramanujan's theta functions, Lost Notebook, Heine's method, $q$-Lauricella function}
%\\[12pt]
%{\bf 2010 Mathematics Subject Classification.} {33D15, 33D65}
%}
\maketitle

\section{Introduction}
In Chapter 1, Part II of their edited version of Ramanujan's  \cite{Ramanujan-LN} Lost Notebook, Andrews and Berndt \cite{AB2009} have organized Ramanujan's transformation formulas related to Heine's  $_2\phi_1$ transformations. While studying their work, we discovered a large number of formulas that are proximal to Ramanujan's own entries. 

For example, one of Ramanujan's formulas is \cite[Entry 1.6.6]{AB2009}: for $|q|<1$,
\begin{align*}%\label{1.6.6}
\frac{1}{1-q}+\sum_{j=1}^{\infty} &
\frac {(-1)^j q^{j^2+j} }{\left( 1-q^{2j+1} \right) \left[(1-q^2)(1-q^4)(1-q^6)\cdots (1-q^{2j}) \right] } \cr
&= \sum_{k=0}^\infty q^{\frac{k(k+1)}{2}} \cr
&=1+q^1+q^3+q^6+q^{10}+q^{15}+\cdots.
%\frac{\pqrfac{q^2}{\infty}{q^2}}{\pqrfac{q}{\infty}{q^2}},
\end{align*}
The right hand side is the well-known  theta function which Ramanujan denoted as $\psi(q)$. It has the product representation (see Berndt \cite[p.\ 11]{Berndt2006})
$$\psi(q):= \sum_{k=0}^\infty q^{\frac{k(k+1)}{2}}=
\prod_{k= 0}^\infty
\frac{(1-q^{2k+2})}{(1-q^{2k+1}) }.$$
We recover Ramanujan's Entry 1.6.6 and, in the same breath, obtain the formula
\begin{align*}%\label{GB-1.6.6a}
\frac{1}{1+q}+\sum_{j=i}^{\infty} &
\frac {(-1)^j q^{{j+1\choose 2}} }{ \left( 1+q^{j+1} \right) 
\left[(1-q)(1-q^2)(1-q^3)\cdots (1-q^{j}) \right] }\cr
&= \sum_{k=-\infty}^\infty (-1)^kq^{k^2} \cr
&= 1-2q+2q^4-2q^9+\cdots.
\end{align*}
Now the right hand side is  (in Ramanujan's notation) $\phi(-q)$, with product representation given by \cite[eq.\ (1.4.10)]{AB2009}
$$\phi(-q):= \sum_{k=-\infty}^\infty (-1)^kq^{k^2} =
\prod_{k= 1}^\infty
\frac{(1-q^{k})}{(1+q^{k}) }.$$

For our next example, we require some notation. 
The $q$-rising factorial is defined as $\qrfac{A}{0} :=1$, and when $k$ is a positive integer, 
\begin{equation*}
\qrfac{A}{k} := (1-A)(1-Aq)\cdots (1-Aq^{k-1}).
\end{equation*}
Notice that it is a product of $k$ terms. The parameter $q$ is called the \lq base'.  The infinite $q$-rising factorial is defined, for $|q|<1$, as
\begin{equation*}%\label{elementary1}
\qrfac{A}{\infty} := \prod_{r=0}^{\infty} (1-Aq^r).
\end{equation*}
Observe that, for  $|q|<1$ \cite[eq.\ (I.5)]{GR90},
\begin{equation}\label{elementary1}
\qrfac{A}{k} = \frac{\qrfac{A}{\infty}}{\qrfac{Aq^k}{\infty}}.
\end{equation}
This is used to define $q$-rising factorials when $k$ is a complex number. 

With this notation, consider Ramanujan's formula \cite[Entry 1.4.17]{AB2009}
\begin{align*}\label{1.4.17}
\pqrfac{-aq}{\infty}{q} &
\sum_{j=0}^{\infty} 
\frac {b^j q^{{j+1\choose 2}} }{\pqrfac{q}{j}{q} \pqrfac{-aq}{tj}{q}} \cr
=&
\pqrfac{-bq}{\infty}{q} \sum_{k=0}^{\infty} 
\frac{ a^k q^{{k+1\choose 2}} }{\pqrfac{q}{k}{q} \pqrfac{-bq}{tk}{q}}
\end{align*}
and compare with the identity
\begin{align*}%\label{GB-1.4.12}
\pqrfac{-aq^h}{\infty}{q^h} &
\sum_{j=0}^{\infty} 
\frac {b^j q^{t{j+1\choose 2}} }{\pqrfac{q^t}{j}{q^t} \pqrfac{-aq^h}{tj}{q^h}} \cr
=&
\pqrfac{-bq^t}{\infty}{q^t} \sum_{k=0}^{\infty} 
\frac{ a^k q^{h{k+1\choose 2}} }{\pqrfac{q^h}{k}{q^h} \pqrfac{-bq^t}{hk}{q^t}},
\end{align*}
obtained in our study. Here $|q|<1$ and $|q^t|<1$ in the first formula, and $|q^h|<1$, $|q^t|<1$ and 
$|q^{ht}|<1$ in the second. The reader may enjoy recovering Entry 1.4.17 from this formula. 

The objective of this paper is to report on our study of \cite[Ch.\ 1]{AB2009}. We are able to obtain 14 of Ramanujan's entries as immediate special cases of a particular transformation formula, and a large number of identities that are proximal to Ramanujan's own entries. In addition, we give a multibasic generalization of Andrews' 1972 formula for a $q$-Lauricella function and obtain a few interesting special cases, which again extend formulas of Ramanujan. 

During the course of our study, we stumbled upon the transformation formula
\begin{equation}\label{GB-Heine}
\sum_{k=0}^{\infty} \frac{\pqrfac{a}{k}{q^h}}{\pqrfac{q^h}{k}{q^h}}
\frac { \pqrfac{b}{hk}{q^t} }{\pqrfac{c}{hk}{q^t}} z^k 
=
\frac{\pqrfac{b}{\infty}{q^t}}{\pqrfac{c}{\infty}{q^t}}\frac{\pqrfac{az}{\infty}{q^h} }{\pqrfac{z}{\infty}{q^h}}
\sum_{j=0}^{\infty} \frac{\pqrfac{c/b}{j}{q^t}} {\pqrfac{q^t}{j}{q^t} }
\frac {\pqrfac{z}{tj}{q^h}}{\pqrfac{az}{tj}{q^h}} b^j,
\end{equation}
where $|z|<1$, $|b|<1$, and $h$ and $t$ are complex numbers such that $|q^h|<1$, $|q^t|<1$ and $|q^{ht}|<1$. 
Andrews and Berndt \cite{AB2009} use the $t=1$ case of this result (a formula due to Andrews \cite[Lemma 1]{An1966b}) often combined with the $h=1$ and $t=1$ case (a famous transformation of Heine, see Gasper and Rahman  \cite[Eq.~1.4.1]{GR90}). But these authors seem to have missed writing down \eqref{GB-Heine} explicitly, even though it can be proved in the same manner as Heine's result, and indeed follows from a very general approach to Heine's ideas, which Andrews \cite{An1966a} calls his \lq Fundamental Lemma'. This useful and simple identity may be a special case of a 50 year old identity, but it has not shown up in the standard textbook by Gasper and Rahman \cite{GR90}, and perhaps deserves to be highlighted. And so, in \S\ref{sec:heine-method}, we attempt a brief introduction. 

The plan for the rest of the paper is as follows. In \S \ref{sec:ramanujan2p1} and \S \ref{sec:1.6.6}, we report on our study of Ramanujan's transformation formulas. This part of our work can be considered to be an addendum to Chapter 1 of Andrews and Berndt \cite{AB2009}. 
%Many of Ramanujan's identities follow from the $h=2$ and $t=1$ case of \eqref{GB-sym-Heine}. 
 In \S \ref{sec:lauricella}, we closely follow ideas from Andrews \cite{An1972} to extend our work to multiple series that extend $q$-analogs of the Lauricella functions. We give a multibasic generalization of Andrews' formula \cite[eq.~(4.1)]{An1972}, and give several generalizations of two of Ramanujan's identities.

Before proceeding to Ramanujan's $_2\phi_1$ transformations, we consider \eqref{GB-Heine} again from the perspective of Heine's original ideas, an approach that Andrews and Berndt \cite{AB2009} have dubbed  \lq Heine's method'.   

\section{Heine's method: Transformations of Heine, Ramanujan and Andrews}\label{sec:heine-method}
This section is an introduction to Identity \eqref{GB-Heine}. We begin with a famous transformation formula of Heine that he found in 1847.
Heine's transformation formula \cite[eq.~78]{EH1847}  is
\begin{equation}\label{Heine-original}
\frac{\pqrfac{cx}{\infty}{q}}{\pqrfac{bx}{\infty}{q}}
\sum_{k=0}^{\infty} \frac{\pqrfac{a}{k}{q} \pqrfac{bx}{k}{q} }
{\pqrfac{q}{k}{q}\pqrfac{cx}{k}{q}}z^k 
=
\frac{\pqrfac{az}{\infty}{q} }{\pqrfac{z}{\infty}{q}}
\sum_{j=0}^{\infty} \frac{\pqrfac{c/b}{j}{q}\pqrfac{z}{j}{q}} {\pqrfac{q}{j}{q}\pqrfac{az}{j}{q} }
 (bx)^j.
\end{equation}
This is almost as Heine himself wrote it, except that he wrote $q^\alpha$, $q^\beta$ and $q^\gamma$ in place of $a, b$ and $c$.  Usually, this formula is stated with $x=1$, see Gasper and Rahman 
\cite[eq.\ (1.4.1)]{GR90}. 
%The original formulation \eqref{Heine-original} can be recovered from this case, by 
%taking $b\mapsto bx$ and $c\mapsto cx$. 

Heine's formula was rediscovered by Ramanujan. It appears as Entry 6 in Chapter 16 of his second notebook, see Berndt \cite[p.~15]{Berndt1991-RN3}. In addition, there is  another transformation formula of Ramanujan resembling \eqref{Heine-original}. It appeared on Page 3 of the famous Lost Notebook \cite{Ramanujan-LN} (see \cite[Entry 1.4.1]{AB2009}),
and is dated circa 1919, going by Andrews and Berndt's \cite[p.~4]{AB2005} remarks on the likely timing of work presented in the Lost Notebook.
\begin{align} \label{1.4.1}
\frac{\qrfac{aq}{\infty}\pqrfac{cq}{\infty}{q^2}}{\qrfac{-bq}{\infty}\pqrfac{dq^2}{\infty}{q^2}} 
&
\sum_{j=0}^\infty \frac{\qrfac{-bq/a}{j}}{\qrfac{q}{j}}\frac{\pqrfac{dq^2}{j}{q^2}}{\pqrfac{cq}{j+1}{q^2}} (aq)^j \notag \\
&=\sum_{k=0}^\infty \frac{\pqrfac{cq/d}{k}{q^2}}{\pqrfac{q^2}{k}{q^2}}
\frac{\pqrfac{aq}{2k}{q}}{\pqrfac{-bq}{2k+1}{q}} (dq^2)^k.
\end{align}
This is Ramanujan's Entry 1.4.1 and it resembles Heine's transformation (coincidentally,  
eq.\ (1.4.1) of \cite{GR90}). Both the series have two products each in the numerator and denominator, and there are four infinite products outside the sums. However, some of the factors in the sums have base $q^2$ rather than 
$q$, and the number of terms in some of the factors of the summands are different. For example, notice the product $(aq;q)_{2k}$, a product of $2k$ factors in the summand on the right hand side of \eqref{1.4.1}. 

Andrews and Berndt \cite{AB2009} study many of Ramanujan's transformation formulas (in particular \eqref{1.4.1})  in Chapter 1, Part II of their series of books on Ramanujan's Lost Notebook. A key component of their study is Andrews' 1966 transformation formula \cite[Lemma 1]{An1966b}:
\begin{equation}\label{andrews1}
\sum_{k=0}^{\infty} \frac{\pqrfac{a}{k}{q^h}}{\pqrfac{q^h}{k}{q^h}}\frac { \qrfac{b}{hk} }{\qrfac{c}{hk}} z^k 
=
\frac{\qrfac{b}{\infty}}{\qrfac{c}{\infty}}\frac{\pqrfac{az}{\infty}{q^h} }{\pqrfac{z}{\infty}{q^h}}
\sum_{j=0}^{\infty} \frac{\qrfac{c/b}{j}} {\qrfac{q}{j} }\frac {\pqrfac{z}{j}{q^h}}{\pqrfac{az}{j}{q^h}} b^j,
\end{equation}
where $h=1, 2, 3, \dots$. Andrews' formula contains both \eqref{Heine-original} and  \eqref{1.4.1}. This can be seen by taking $h=1$ and $h=2$, respectively.  Andrews' transformation can also be found in \cite[Th.~1.2.1, p.~6]{AB2009} and  \cite[Ex. 3.35, p.~111]{GR90}.

Now, inspired by Heine's formulation \eqref{Heine-original}, we write Andrews' transformation more symmetrically as follows.
\begin{equation*}%\label{andrews-symmetric}
\frac{\qrfac{bw}{\infty}}{\qrfac{w}{\infty}}
\sum_{k=0}^{\infty} \frac{\pqrfac{a}{k}{q^h}}{\pqrfac{q^h}{k}{q^h}}\frac { \qrfac{w}{hk} }{\qrfac{bw}{hk}} z^k 
=
\frac{\pqrfac{az}{\infty}{q^h} }{\pqrfac{z}{\infty}{q^h}}
\sum_{j=0}^{\infty} \frac{\qrfac{b}{j}} {\qrfac{q}{j} }\frac {\pqrfac{z}{j}{q^h}}{\pqrfac{az}{j}{q^h}} w^j.
\end{equation*}
This form suggests a further generalization of \eqref{andrews1}, where now we have terms involving two bases $q^h$ and $q^t$ (and hence the adjective {\em bibasic}).
\begin{Theorem}[A bibasic Heine transformation]
\label{th:GB-sym-Heine} Let $q$, $a$, $b$, $h$ and $t$ be complex numbers such that $|q^h|<1$, $|q^t|<1$ and $|q^{ht}|<1$, and suppose that the denominators in \eqref{GB-sym-Heine} are not zero. Then for $|w|<1$ and $|z|<1$, 
\begin{equation}\label{GB-sym-Heine}
\frac{\pqrfac{bw}{\infty}{q^t}}{\pqrfac{w}{\infty}{q^t}}
\sum_{k=0}^{\infty} \frac{\pqrfac{a}{k}{q^h}}{\pqrfac{q^h}{k}{q^h}}\frac { \pqrfac{w}{hk}{q^t} }{\pqrfac{bw}{hk}{q^t}} z^k 
=
\frac{\pqrfac{az}{\infty}{q^h} }{\pqrfac{z}{\infty}{q^h}}
\sum_{j=0}^{\infty} \frac{\pqrfac{b}{j}{q^t}} {\pqrfac{q^t}{j}{q^t} }\frac {\pqrfac{z}{tj}{q^h}}{\pqrfac{az}{tj}{q^h}} w^j.
\end{equation}
\end{Theorem}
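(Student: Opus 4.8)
The plan is to prove \eqref{GB-sym-Heine} by Heine's method, using as the only tool the $q$-binomial theorem
\begin{equation*}
\sum_{k=0}^{\infty} \frac{\pqrfac{a}{k}{p}}{\pqrfac{p}{k}{p}}\, z^k = \frac{\pqrfac{az}{\infty}{p}}{\pqrfac{z}{\infty}{p}}, \qquad |z|<1,\ |p|<1,
\end{equation*}
applied once with base $p=q^t$ and once with base $p=q^h$. I would begin from the left-hand side of \eqref{GB-sym-Heine} and carry out the same three-move pattern that proves Heine's \eqref{Heine-original}: expand one factorial ratio into a series via the $q$-binomial theorem, interchange the order of summation, and then resum the resulting inner series by the $q$-binomial theorem again. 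A pleasant feature is that, because every factorial is interpreted through \eqref{elementary1}, the argument never uses the ``product of $hk$ terms'' reading and so goes through verbatim for complex $h$ and $t$.

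First I would rewrite the inner $q^t$-factorial ratio appearing on the left. By the elementary identity \eqref{elementary1} with base $q^t$,
\begin{equation*}
\frac{\pqrfac{w}{hk}{q^t}}{\pqrfac{bw}{hk}{q^t}} = \frac{\pqrfac{w}{\infty}{q^t}}{\pqrfac{bw}{\infty}{q^t}}\cdot \frac{\pqrfac{bwq^{thk}}{\infty}{q^t}}{\pqrfac{wq^{thk}}{\infty}{q^t}},
\end{equation*}
and then I would expand the remaining quotient of infinite products by the $q$-binomial theorem with base $q^t$, parameter $b$, and argument $wq^{thk}$:
\begin{equation*}
\frac{\pqrfac{bwq^{thk}}{\infty}{q^t}}{\pqrfac{wq^{thk}}{\infty}{q^t}} = \sum_{j=0}^{\infty} \frac{\pqrfac{b}{j}{q^t}}{\pqrfac{q^t}{j}{q^t}}\, w^j q^{thkj}.
\end{equation*}
Substituting this into the left-hand side, the prefactor $\pqrfac{bw}{\infty}{q^t}/\pqrfac{w}{\infty}{q^t}$ cancels against the reciprocal factor just produced, leaving the double sum
\begin{equation*}
\sum_{k=0}^{\infty}\sum_{j=0}^{\infty} \frac{\pqrfac{a}{k}{q^h}}{\pqrfac{q^h}{k}{q^h}}\, \frac{\pqrfac{b}{j}{q^t}}{\pqrfac{q^t}{j}{q^t}}\, z^k w^j q^{thkj}.
\end{equation*}

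Next I would interchange the two sums and isolate the $k$-dependence. Writing $z^k q^{thkj} = (zq^{thj})^k$ and summing over $k$ by the $q$-binomial theorem with base $q^h$, the inner sum collapses to $\pqrfac{azq^{thj}}{\infty}{q^h}/\pqrfac{zq^{thj}}{\infty}{q^h}$. Since $q^{thj}=(q^h)^{tj}$, a second application of \eqref{elementary1} (now with base $q^h$) gives
\begin{equation*}
\frac{\pqrfac{azq^{thj}}{\infty}{q^h}}{\pqrfac{zq^{thj}}{\infty}{q^h}} = \frac{\pqrfac{az}{\infty}{q^h}}{\pqrfac{z}{\infty}{q^h}}\cdot \frac{\pqrfac{z}{tj}{q^h}}{\pqrfac{az}{tj}{q^h}},
\end{equation*}
and pulling the $j$-independent factor $\pqrfac{az}{\infty}{q^h}/\pqrfac{z}{\infty}{q^h}$ out of the $j$-sum produces exactly the right-hand side of \eqref{GB-sym-Heine}.

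The one step demanding care, and hence the main obstacle, is the interchange of summation, so the real content is to justify absolute convergence of the double series displayed above. Here the hypotheses enter transparently: the quotients $\pqrfac{a}{k}{q^h}/\pqrfac{q^h}{k}{q^h}$ and $\pqrfac{b}{j}{q^t}/\pqrfac{q^t}{j}{q^t}$ are bounded in $k$ and in $j$ respectively, each tending to a finite limit as its index grows because $|q^h|<1$ and $|q^t|<1$; consequently the double sum is dominated by a constant multiple of $\sum_{k,j}|z|^k|w|^j|q^{ht}|^{kj}$. The conditions $|z|<1$, $|w|<1$ and $|q^{ht}|<1$ make this majorant converge, and the same estimates $|wq^{thk}|\le|w|<1$ and $|zq^{thj}|\le|z|<1$ show that both invocations of the $q$-binomial theorem are within their radius of convergence. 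With absolute convergence established, Fubini's theorem licenses the interchange, and the computation closes as sketched.
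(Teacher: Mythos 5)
Your proposal is correct and follows essentially the same route as the paper's own proof: Heine's method, with \eqref{elementary1} converting the finite factorials to infinite products, two applications of the $q$-binomial theorem (base $q^t$ then base $q^h$), and an interchange of summation. Your explicit domination of the double series by $\sum_{k,j}|z|^k|w|^j|q^{ht}|^{kj}$ is a slightly more detailed justification of the interchange than the paper records, but it is the same argument in substance.
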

\begin{Remark}
Replace $w$ by $b$ and $b$ by $c/b$ in \eqref{GB-sym-Heine} to obtain the form \eqref{GB-Heine} of the identity.
\end{Remark}

Before heading into the proof of Theorem \ref{th:GB-sym-Heine}, we make a few comments on the convergence 
of the series and products appearing in this identity. 

Observe that we require the conditions
$|q^t|<1$ and $|q^h|<1$ for the convergence of the infinite products $\pqrfac{w}{\infty}{q^t}$ and $\pqrfac{z}{\infty}{q^h}$. In view of \eqref{elementary1}, we require these conditions for the definition of products such as  $\pqrfac{w}{hk}{q^t}$ too. 

Next, note that the function $f(w):=\pqrfac{w}{\infty}{q}$ is a continuous function of $w$ in a neighborhood of $w=0$, and $f(0)=1.$ This follows from the fact that for fixed $q$, with $0<|q|<1$, the sequence of partial products 
$$f_k(w)=\prod\limits_{r=0}^{k-1}\left(1-wq^{r}\right)$$
converges absolutely to $f(w)$, and the convergence is uniform in a closed disk around $w=0$ contained in the unit disk $\{w \in {\mathbb C} : |w|<1\}$. 

Now we consider a factor such as $\pqrfac{w}{hk}{q^t}$, and show that if $|q^{ht}|<1$, then for large enough $k$, $|\pqrfac{w}{hk}{q^t}|$ is approximately equal to
$|\pqrfac{w}{\infty}{q^t}|$. 

By definition, we have
$$\pqrfac{w}{hk}{q^t} =\frac{\pqrfac{w}{\infty}{q^t}}{\pqrfac{wq^{thk}}{\infty}{q^t}}.$$
Now since $|q^{ht}|<1$, we must have $|q^{htk}|\to 0$ as $k\to \infty$, and thus, by the continuity of 
$f(w)$, $\pqrfac{wq^{thk}}{\infty}{q^t}\to \pqrfac{0}{\infty}{q^t}=1$. Thus for large enough $k$, 
$|\pqrfac{w}{hk}{q^t}|$ is approximately 
$|\pqrfac{w}{\infty}{q^t}|$. 

Using the above remarks, we can consider the absolute convergence of the series appearing on either side of \eqref{GB-sym-Heine}. Consider first the left hand side of \eqref{GB-sym-Heine}. We replace all the $q$-rising factorials in the summand by ratios of infinite products, using \eqref{elementary1}. Then we find that for large enough $k$, the absolute value of the  summand is bounded by a constant times the factor
$ |z|^k.$
Since the geometric series 
$$\sum_{k=0}^\infty z^k$$
converges absolutely for $|z|<1$, the sum on the left hand side of \eqref{GB-sym-Heine} converges absolutely for $|z|<1$. Similarly, the sum on the right hand side converges absolutely for $|w|<1$. 

To summarize, we have the conditions $|q^h|<1$, $|q^t|<1$, $|q^{ht}|<1$, $|z|<1$ and $|w|<1$ for the convergence of the products and series. 

We now proceed with the proof of the theorem. 
Theorem \ref{th:GB-sym-Heine} can be obtained as a very special case of Andrews' \cite{An1966a}  Fundamental Lemma (see our remark below). But we prove it on the lines of the proof of Heine's own proof of his transformation formula, which Andrews and Berndt \cite{AB2009} call Heine's method. We only require the identity \eqref{elementary1}
and 
 the $q$-binomial theorem \cite[eq. (1.3.2)]{GR90}: For $|z|<1$, $|q|<1$
\begin{equation}\label{q-bin}
\frac{\qrfac{az}{\infty}}{\qrfac{z}{\infty}} = 
\sum_{k=0}^{\infty} 
\frac{\qrfac{a}{k}}{\qrfac{q}{k}} z^k.
\end{equation}
\begin{proof}[Proof of Theorem \ref{th:GB-sym-Heine}] 
We begin with the left hand side of \eqref{GB-sym-Heine}.
\begin{align*}
\frac{\pqrfac{bw}{\infty}{q^t} }{\pqrfac{w}{\infty}{q^t}} 
\sum_{k=0}^{\infty} & \frac{\pqrfac{a}{k}{q^h}}{\pqrfac{q^h}{k}{q^h}}
\frac { \pqrfac{w}{hk}{q^t} }
{\pqrfac{bw}{hk}{q^t}} z^k  \cr
& = \sum_{k=0}^{\infty}  \frac{\pqrfac{a}{k}{q^h} }{\pqrfac{q^h}{k}{q^h}}z^k 
\frac{\pqrfac{bwq^{htk}}{\infty}{q^t} }{\pqrfac{wq^{htk}}{\infty}{q^t}} 
\quad \text{ (using \eqref{elementary1})}
 \cr
&= 
\sum_{k=0}^{\infty} \frac{\pqrfac{a}{k}{q^h} }{\pqrfac{q^h}{k}{q^h}}z^k 
\sum_{j=0}^{\infty} \frac{\pqrfac{b}{j}{q^t}  }{\pqrfac{q^t}{j}{q^t}} (wq^{htk})^j
\quad \text{ (using \eqref{q-bin})}
\cr
 &= 
\sum_{j=0}^{\infty} \frac{\pqrfac{b}{j} {q^t} }{\pqrfac{q^t}{j}{q^t}} w^j
\sum_{k=0}^{\infty} \frac{\pqrfac{a}{k}{q^h} }{\pqrfac{q^h}{k}{q^h}}(zq^{htj})^k 
 \cr
 &= 
 \sum_{j=0}^{\infty} \frac{\pqrfac{b}{j}{q^t}  }{\pqrfac{q^t}{j}{q^t}} w^j
\frac{\pqrfac{azq^{htj}}{\infty}{q^h} }{\pqrfac{zq^{htj}}{\infty}{q^h}}
\quad \text{ (using \eqref{q-bin} again)}
 \cr
 &=\frac{\pqrfac{az}{\infty}{q^h} }{\pqrfac{z}{\infty}{q^h}}
\sum_{j=0}^{\infty} \frac{\pqrfac{b}{j}{q^t}} {\pqrfac{q^t}{j}{q^t} }
\frac {\pqrfac{z}{tj}{q^h}}{\pqrfac{az}{tj}{q^h}} w^j.
%\notag
    \end{align*}
Observe that $|w|<1$ and $|q^{ht}|<1$ implies $|wq^{htk}|<1$. Similarly, we must have
$|zq^{htj}|<1$. These conditions are required for the absolute convergence of the $q$-binomial series used here, and to justify the interchange of summation. 
\qed
\end{proof}

\begin{Remark} Andrews \cite[Lemma 1]{An1966b} (see also \cite[Ex. 3.35]{GR90}) mentions that the formula \eqref{andrews1} is valid when $h=1, 2, 3, \dots$. However, as we have seen, with sufficient conditions, we can take $h$ to be a complex number in \eqref{andrews1}. 
\end{Remark}

Observe that the $b=c$ case of Heine's transformation is \eqref{q-bin}, the  $q$-binomial theorem. When $b=c$, the summand contains the factor $\qrfac{1}j$ that is $1$ when $j=0$, and $0$ when $j>0$. Thus the sum on the right hand side of \eqref{Heine-original} reduces to $1$, and we obtain \eqref{q-bin}. 

A key property of Heine's transformation is that it can be iterated, and the process of iteration leads to symmetries of the sum which are useful in many contexts. See Gasper and Rahman \cite[eqs.\ (1.4.2) and (1.4.5)]{GR90}. 
Unfortunately, equation \eqref{GB-sym-Heine} cannot be iterated, making it less useful than Heine's transformation.   However, there is a bibasic version of a special case of Heine's second iterate due to Guo and Zeng \cite[Th. 2.2]{GZ2015}. 

There are also bibasic transformation formulas due to Gasper \cite[eq.\ (1.12)]{GG1989} (reproduced in \cite[Ex.\ 3.20]{GR90}). These  consist of four sums that are equal to each other. By equating the second and fourth sum, we get a formula equivalent to Andrews' transformation formula. Replace $p$ by $q^h$ in Gasper's transformation to obtain an equivalent form of \eqref{andrews1}.

\begin{Remark} 
Andrews  stated and used \eqref{andrews1} in \cite{An1966b}, and derived it using Theorem A of \cite{An1966a}, which in turn is derived from his \lq Fundamental Lemma'.  This lemma is really a most general approach to Heine's method, and should be better known. Andrews' \cite{An1966a} Fundamental Lemma can be stated as:
\begin{align}\label{Andrews-FL}
\sum_{k=0}^{\infty}  \frac{\pqrfac{a}{rk+s}{q}}{\pqrfac{q}{rk+s}{q}}
\frac { \pqrfac{b}{uk+v}{p} }{\pqrfac{c}{uk+v}{p}} z^k 
=
\frac{1}{r} & \frac{\pqrfac{b}{\infty}{p}}{\pqrfac{c}{\infty}{p}} 
\sum_{t=0}^{r-1} \omega_r^{-st} z^{-s/r} \cr
\times &
 \sum_{j=0}^{\infty} \frac{\pqrfac{c/b}{j}{p}} {\pqrfac{p}{j}{p} } 
\frac {\pqrfac{a\omega_r^t z^{1/r}p^{uj/r}}{\infty}{q}}{\pqrfac{\omega_r^t z^{1/r}p^{uj/r}}{\infty}{q}}
\left(bp^{v-us/r}\right)^j, \cr
\end{align}
where $\omega_r = e^{2\pi i/r}$ or some other primitive $r$th  root of unity, and  we assume the parameters satisfy suitable conditions to guarantee convergence of the two series. 

Equation \eqref{GB-Heine} can be obtained as a special case of \eqref{Andrews-FL}. Take $r=1$, $u=h$, $s=0=v$, $q\mapsto q^h$, and $p\mapsto q^t$ to obtain the second last step
(suitably re-labeled) in our proof of Theorem~\ref{th:GB-sym-Heine}. Professor Krattenthaler has remarked that, in fact, \eqref{Andrews-FL} follows from \eqref{GB-Heine} by \lq sectioning\rq\ the series on the left (a process described in our remark in \S\ref{sec:1.6.6}). In other words, Andrews Fundamental Lemma is equivalent to \eqref{GB-Heine}.  This involves recognizing that  we can write the factors in the sums with two independent bases $q$ and $p$, since $h$ and $t$ are complex numbers. Indeed, with these considerations we can re-write \eqref{GB-Heine}  as the $r=1$ case of \eqref{Andrews-FL}.  

The reader may enjoy proving \eqref{Andrews-FL} directly using Heine's method and sectioning.  
\end{Remark}

This completes our introduction to \eqref{GB-Heine}. We now consider special cases related to Ramanujan's transformations. In the rest of the paper, when stating special cases of \eqref{GB-sym-Heine}, we do not always explicitly state all the applicable convergence conditions mentioned in Theorem \ref{th:GB-sym-Heine}.

\section{Special cases inspired by Ramanujan's $_2\phi_1$ transformations}\label{sec:ramanujan2p1}

While studying Andrews and Berndt \cite[ch.~1]{AB2009}, we realized that many of Ramanujan's transformations in \cite[\S 1.4]{AB2009} are immediate special cases of  Ramanujan's transformation \eqref{1.4.1}, where one takes limits or special cases such as $a\to 0$, $b=0$, $c=0$ and $d\to 0$ and combinations of these. So we first re-write the bibasic Heine transformation in the form of Ramanujan's Entry 1.4.1, with a view to study its special cases. We will find that 
several of Ramanujan's entries in Chapter 1 of  \cite{AB2009} are immediate  special cases.  In addition, we note new identities that resemble Ramanujan's formulas.
\subsection*{Entry 1.4.1}
First we write \eqref{GB-Heine} in the form of Ramanujan's formula, by taking  $a\mapsto cq/d$, $b\mapsto aq^t$, $c\mapsto -bq^{t+1}$, and $z\mapsto dq^h$. Now divide both sides by $1+bq$ and multiply and divide the RHS by $1-cq$ and interchange the sides to
obtain
\begin{align}\label{GB-1.4.1}
\frac{\pqrfac{aq^t}{\infty}{q^t} \pqrfac{cq}{\infty}{q^h} }{\pqrfac{-bq}{\infty}{q^t}\pqrfac{dq^h}{\infty}{q^h}} &
\sum_{j=0}^{\infty} \frac{\pqrfac{-bq/a}{j}{q^t}} {\pqrfac{q^t}{j}{q^t} }
\frac {\pqrfac{dq^h}{tj}{q^h}}{\pqrfac{cq}{tj+1}{q^h}} (aq^t)^j \cr
=&
\sum_{k=0}^{\infty} \frac{\pqrfac{cq/d}{k}{q^h}}{\pqrfac{q^h}{k}{q^h}}
\frac{ \pqrfac{aq^t}{hk}{q^t} }{\pqrfac{-bq}{hk+1}{q^t}} (dq^h)^k .
\end{align}
Again, $h$ and $t$ are complex numbers, and we have the conditions $|q^h|<1$, $|q^t|<1$ and $|q^{ht}|<1$.  Further, for the series to converge, we require $|aq^t|<1$ and $|dq^h|<1$.  

Note that when $h=2$ and $t=1$, this reduces to \eqref{1.4.1}, Ramanujan's Entry 1.4.1.
The rest of Ramanujan's entries presented below are also special cases of \eqref{GB-1.4.1}.

\subsection*{Entry 1.4.2}
In equation \eqref{GB-1.4.1} take $a=d=1$, replace $c$ by $a$, and bring the product 
$\pqrfac{-bq}{\infty}{q^t}$ to the other side.  In this manner, we obtain a generalization of Entry 1.4.2:
\begin{align}\label{GB-1.4.2}
\frac{\pqrfac{q^t}{\infty}{q^t} \pqrfac{aq}{\infty}{q^h} }{\pqrfac{q^h}{\infty}{q^h}} &
\sum_{j=0}^{\infty} \frac{\pqrfac{-bq}{j}{q^t}} {\pqrfac{q^t}{j}{q^t} }
\frac {\pqrfac{q^h}{tj}{q^h}}{\pqrfac{aq}{tj+1}{q^h}} q^{tj} \cr
=&
\pqrfac{-bq}{\infty}{q^t} \sum_{k=0}^{\infty} \frac{\pqrfac{aq}{k}{q^h}}{\pqrfac{q^h}{k}{q^h}}
\frac{ \pqrfac{q^t}{hk}{q^t} }{\pqrfac{-bq}{hk+1}{q^t}} q^{hk} .
\end{align}

To take special cases, we use the following elementary identities from Gasper and Rahman~\cite[eq.~(I.27)]{GR90}:
\begin{equation}\label{GRI.27}
\qrfac{a}{rk} = \pqrfac{a, aq, aq^2, \dots, aq^{r-1}}{k}{q^r}, 
\end{equation}
and \cite[eq.~(I.30)]{GR90}
\begin{equation}\label{GRI.30}
\pqrfac{a^r}{k}{q^r} = \qrfac{a, a\omega_r, a\omega_r^2, \dots, a\omega_r^{r-1}}{k},
\end{equation}
where $\omega_r = e^{2\pi i /r}$ or some other primitive $r$th root of unity; here, we use the shorthand notation
$$\qrfac{a_1, a_2, \dots, a_n}{k} =\qrfac{a_1}{k}\qrfac{a_2}{k}\cdots \qrfac{a_n}{k}.$$

When $t=1$, and $h$ is a natural number bigger than $1$, then \eqref{GB-1.4.2} reduces to
\begin{align} \label{GB-1.4.2-h}
\left(q,q^2, \dots,  q^{h-1}; q^h\right)_{\infty} &  \pqrfac{aq}{\infty}{q^h} 
\sum_{j=0}^\infty \frac{\qrfac{-bq}{j}}{\pqrfac{aq}{j+1}{q^h}} (q\omega_h ,q\omega_h^2, \dots, q\omega_h^{h-1}; q)_{j}  q^j \notag \\
&= \qrfac{-bq}{\infty} \sum_{k=0}^\infty \frac{\pqrfac{aq}{k}{q^h}}{\pqrfac{-bq}{hk+1}{q}} 
(q,q^2, \dots, q^{h-1}; q^h)_{k} q^{hk},
\end{align}
where $\omega_h = e^{2\pi i /h}$ or some other primitive $h$th root of unity.  We have used
 \eqref{GRI.27} and \eqref{GRI.30} to write this expression.

When $h=2$, the primitive $h$th root of unity reduces to $-1$, and we obtain Ramanujan's transformation \cite[Entry 1.4.2]{AB2009}:
\begin{align} \label{1.4.2}
\left(q ; q^2\right)_{\infty}  \pqrfac{aq}{\infty}{q^2} 
\sum_{j=0}^\infty & \frac{\qrfac{-bq}{j}(-q ; q)_{j} }{\pqrfac{aq}{j+1}{q^2}}  q^j \cr
&= \qrfac{-bq}{\infty} \sum_{k= 0}^\infty \frac{\pqrfac{aq}{k}{q^2}(q; q^2)_{k}}{\pqrfac{-bq}{2k+1}{q}} 
 q^{2k}.
\end{align}
Observe that the denominator of the sum on the left does not contain the (usually) mandatory term $\pqrfac{q}{j}{q}$. This term is required to  terminate the series naturally from below, because
$$\frac{1}{\pqrfac{q}{j}{q}} = 0 \text{ whenever } j<0.$$
The same is true for the right hand side. This seems to be the motive for considering this special case. See also \eqref{1.4.10}, \eqref{1.4.11}, and Entry 1.6.5 (and related identities) below. 

\subsection*{Entry 1.4.5}
If we set $a=0$ in \eqref{GB-1.4.2}, replace $b$ by $a$, and bring all the infinite products to the right, we obtain a generalization of  Entry 1.4.5:
\begin{align}\label{GB-1.4.5}
\sum_{j=0}^{\infty}  & \frac{\pqrfac{-aq}{j}{q^t} \pqrfac{q^h}{tj}{q^h}} {\pqrfac{q^t}{j}{q^t} }
q^{tj} \cr
=&
\frac{\pqrfac{-aq}{\infty}{q^t}\pqrfac{q^h}{\infty}{q^h}  }{\pqrfac{q^t}{\infty}{q^t}} 
\sum_{k=0}^{\infty}
\frac{ \pqrfac{q^t}{hk}{q^t} }{\pqrfac{q^h}{k}{q^h}\pqrfac{-aq}{hk+1}{q^t}} q^{hk} .
\end{align}
When $t=1$,  and $h>1$ is a positive integer, this reduces to
\begin{align} \label{GB-1.4.5-t}
   \sum_{j=0}^\infty \qrfac{-aq}{j}  & (q\omega_h ,q\omega_h^2, \dots, q\omega_h^{h-1}; q)_{j}  q^j
= (q\omega_h ,q\omega_h^2, \dots, q\omega_h^{h-1}; q)_{\infty}  \qrfac{-aq}{\infty}  \notag \\
 &\times \sum_{k=0}^\infty \frac{1}{\pqrfac{-aq}{hk+1}{q}} 
(q,q^2, \dots, q^{h-1}; q^h)_{k} q^{hk},
\end{align}
where $\omega_h = e^{2\pi i /h}$ or some other primitive $h$th root of unity.  
Further take $h=2$ to obtain Ramanujan's formula \cite[Entry 1.4.5]{AB2009}:
\begin{equation} \label{1.4.5}
   \sum_{j=0}^\infty \qrfac{-aq}{j}  (-q; q)_{j}  q^j
= \left(-q; q\right)_{\infty}  \qrfac{-aq}{\infty} 
  \sum_{k=0}^\infty \frac{\left(q ; q^2\right)_{k} }{\pqrfac{-aq}{2k+1}{q}} 
q^{2k}.
\end{equation}
Instead, when $t=2$ and $h=1$ in \eqref{GB-1.4.5}, we obtain the formula
\begin{equation} \label{GB-entry1.4.5}
   \sum_{j=0}^\infty \pqrfac{-aq}{j}{q^2}  \left(q; q^2\right)_{j}  q^{2j}
= \left(q; q^2\right)_{\infty}  \pqrfac{-aq}{\infty}{q^2} 
  \sum_{k= 0}^\infty \frac{\left(-q ; q\right)_{k} }{\pqrfac{-aq}{k+1}{q^2}} 
q^{k}.
\end{equation}

\subsection*{Entry 1.4.3 and Entry 1.4.4}
There is a common generalization of Entry 1.4.3 and Entry 1.4.4. Take $b=0$, $d\to 0$ in \eqref{GB-1.4.1}, replace $c$ by $b/q^h$ and cancel $1-b/q^{h-1}$ on the LHS. Bring the products to the RHS to obtain, for $|aq^t|<1$,
\begin{equation}\label{GB-1.4.3-4}
 \sum_{j=0}^{\infty} 
 \frac{\left( aq^t\right)^j }{\pqrfac{q^t}{j}{q^t} \pqrfac{bq}{tj}{q^h}}
=
\frac{1}{\pqrfac{aq^t}{\infty}{q^t}\pqrfac{bq}{\infty}{q^h}}
 \sum_{k=0}^{\infty}  \frac{\pqrfac{aq^t}{hk}{q^t} }{\pqrfac{q^h}{k}{q^h}}
 (-bq)^k  q^{h{k \choose 2}} .
\end{equation}
When $h=2$ and $t=1$, \eqref{GB-1.4.3-4} reduces to \cite[Entry 1.4.3]{AB2009}:
\begin{equation} \label{1.4.3}
\sum_{j =0}^\infty \frac{a^j q^j  }{\qrfac{q}{j} \pqrfac{bq}{j}{q^2} }
= \frac{1}{\qrfac{aq}{\infty}\pqrfac{bq}{\infty}{q^2}} 
 \sum_{k= 0}^\infty \frac{\pqrfac{aq}{2k}{q} }{\pqrfac{q^2}{k}{q^2}}
(-b)^kq^{k^2} .
\end{equation}
When $h=1$ and $t=2$, \eqref{GB-1.4.3-4} reduces to \cite[Entry 1.4.4]{AB2009}:
 \begin{equation} \label{1.4.4}
\sum_{j=0}^\infty \frac{a^jq^{2j}  }{\pqrfac{q^2}{j}{q^2} \pqrfac{bq}{2j}{q} }
= \frac{1}{\pqrfac{aq^2}{\infty}{q^2}\pqrfac{bq}{\infty}{q}} 
 \sum_{k = 0}^\infty \frac{\pqrfac{aq^2}{k}{q^2} }{\pqrfac{q}{k}{q}}
(-b)^k q^{{k+1\choose 2}} .
\end{equation}

The case $a\to 0$ and $c=0$ case of \eqref{GB-1.4.1} is equivalent to \eqref{GB-1.4.3-4}, up to re-labeling of parameters. 

\subsection*{Entry 1.4.10 and Entry 1.4.11}
Both these entries immediately follow from $h=1=t$ case of \eqref{GB-1.4.3-4}. In this case,  when $a=1=b$ in \eqref{GB-1.4.3-4}, we obtain \cite[Entry 1.4.10]{AB2009}
\begin{equation}\label{1.4.10}
\sum_{j=  0}^\infty \frac{q^j}{\left(q;q\right)^2_j} =\frac{1}{\left(q;q\right)^2_{\infty}}
\sum_{k=0}^{\infty}
(-1)^{k} q^{k+1\choose 2} .
\end{equation}

Next, take $a=q^t$ and $b=q^{h-1}$ in \eqref{GB-1.4.3-4} to obtain
\begin{equation}\label{GB-1.4.11}
 \sum_{j=0}^{\infty} 
 \frac{ q^{2tj}}{\pqrfac{q^t}{j}{q^t} \pqrfac{q^h}{tj}{q^h}}
=
\frac{1}{\pqrfac{q^t}{\infty}{q^t}\pqrfac{q^h}{\infty}{q^h}}
 \sum_{k=0}^{\infty}  \frac{\pqrfac{q^t}{hk+1}{q^t} }{\pqrfac{q^h}{k}{q^h}}
 (-1)^k q^{h{k+1 \choose 2}} .
\end{equation}
Now take $h=1=t$ and simplify as follows.
\begin{align*}%\label{1.4.11-prelim}
\sum_{j=  0}^\infty \frac{q^{2j}}{\left(q;q\right)^2_j} & =\frac{1}{\left(q;q\right)^2_{\infty}}
\sum_{k=0}^{\infty}
(-1)^{k} q^{k+1\choose 2} \left(1-q^{k+1}\right) \cr
&=\frac{1}{\left(q;q\right)^2_{\infty}}
\left[
\sum_{k=0}^{\infty}
(-1)^{k} q^{k+1\choose 2} +
\sum_{k=0}^{\infty}
(-1)^{k+1} q^{k+2\choose 2} \right]
\cr
&=\frac{1}{\left(q;q\right)^2_{\infty}}
\left[ 1+
2\sum_{k=1}^{\infty}
(-1)^{k} q^{k+1\choose 2} \right].
\end{align*}
Ramanujan's Entry 1.4.11 is the first sum equated with the last in this chain of equalities 
\cite[Entry 1.4.11]{AB2009}:
\begin{align}\label{1.4.11}
\sum_{j=  0}^\infty \frac{q^{2j}}{\left(q;q\right)^2_j} =\frac{1}{\left(q;q\right)^2_{\infty}}
\left[ 1+
2\sum_{k=1}^{\infty}
(-1)^{k} q^{k+1\choose 2} \right].
\end{align}

\subsection*{Entry 1.4.12, Entry 1.4.17, Entry 1.4.9, and a part of Entry 1.5.1}
Consider the case $a\to 0$, $d\to 0$ of \eqref{GB-1.4.1}. In the resulting identity, bring the infinite product  $\pqrfac{-bq}{\infty}{q^t}$ to the other side, and cancel $(1-cq)$ on the LHS and $(1+bq)$ on the RHS. Then replace $c$ by $-a/q$ and $b$ by $b/q$ to obtain the appealing identity
\begin{align}\label{GB-1.4.12}
\pqrfac{-aq^h}{\infty}{q^h} &
\sum_{j=0}^{\infty} 
\frac {b^j q^{t{j+1\choose 2}} }{\pqrfac{q^t}{j}{q^t} \pqrfac{-aq^h}{tj}{q^h}} \cr
=&
\pqrfac{-bq^t}{\infty}{q^t} \sum_{k=0}^{\infty} 
\frac{ a^k q^{h{k+1\choose 2}} }{\pqrfac{q^h}{k}{q^h} \pqrfac{-bq^t}{hk}{q^t}} .
\end{align}
Many special cases of this symmetric identity have been found useful, some noted below, and one considered in \S\ref{sec:1.6.6}.

When $h=1$, \eqref{GB-1.4.12} reduces to \cite[Entry 1.4.12]{AB2009}
\begin{align}\label{1.4.12}
\pqrfac{-aq}{\infty}{q} &
\sum_{j=0}^{\infty} 
\frac {b^j q^{t{j+1\choose 2}} }{\pqrfac{q^t}{j}{q^t} \pqrfac{-aq}{tj}{q}} \cr
=&
\pqrfac{-bq^t}{\infty}{q^t} \sum_{k=0}^{\infty} 
\frac{ a^k q^{{k+1\choose 2}} }{\pqrfac{q}{k}{q} \pqrfac{-bq^t}{k}{q^t}} .
\end{align}
Take $h=t$ in \eqref{GB-1.4.12}, and then replace $q$ by $q^{1/t}$ to obtain \cite[Entry 1.4.17]{AB2009}
\begin{align}\label{1.4.17}
\pqrfac{-aq}{\infty}{q} &
\sum_{j=0}^{\infty} 
\frac {b^j q^{{j+1\choose 2}} }{\pqrfac{q}{j}{q} \pqrfac{-aq}{tj}{q}} \cr
=&
\pqrfac{-bq}{\infty}{q} \sum_{k=0}^{\infty} 
\frac{ a^k q^{{k+1\choose 2}} }{\pqrfac{q}{k}{q} \pqrfac{-bq}{tk}{q}} ,
\end{align}
one of Ramanujan's formulas highlighted in the introduction. 
Take $a=-1$ and $b=1$ in Ramanujan's Entry 1.4.17 (eq.\ \eqref{1.4.17}) to obtain
\begin{equation}\label{GB-1.4.9}
 \sum_{j=0}^\infty \frac{q^{{j+1\choose 2}}}{\pqrfac{q}{j}{q}\pqrfac{q}{tj}{q}} 
 =\frac{\pqrfac{-q}{\infty}{q}}{\left(q;q\right)_{\infty}}
\sum_{k=0}^{\infty}
 \frac{(-1)^{k} q^{{k+1\choose 2}}}{\pqrfac{q}{k}{q} \pqrfac{-q}{tk}{q}}.
\end{equation}
This further reduces to \cite[Entry 1.4.9]{AB2009} when $t=1$:
\begin{equation}\label{1.4.9}
 \sum_{j=0}^\infty \frac{q^{j+1\choose 2}}{\left(q;q\right)^2_j} 
 =\frac{\qrfac{-q}{\infty}}{\left(q;q\right)_{\infty}}
\sum_{k=0}^{\infty}
 \frac{(-1)^{k} q^{k+1\choose 2}}{\pqrfac{q^2}{k}{q^2}},
\end{equation}
where we use \cite[eq.\ (I.28)]{GR90}
\begin{equation*}%\label{GRI.28}
\pqrfac{a}{k}{q}\pqrfac{-a}{k}{q} =\pqrfac{a^2}{k}{q^2}
\end{equation*}
in the denominator of the RHS.

In Entry 1.4.17 (eq.\ \eqref{1.4.17}) replace $q$ by $q^2$ and take $t=1$ to obtain an assertion equivalent to an observation of M. Soros (see \cite[eq.\ (1.5.1)]{AB2009}):
\begin{align}\label{m.soros}
\pqrfac{-aq^2}{\infty}{q^2} &
\sum_{j=0}^{\infty} 
\frac {b^j q^{j^2+j} }{\pqrfac{q^2}{j}{q^2} \pqrfac{-aq^2}{j}{q^2}} \cr
=&
\pqrfac{-bq^2}{\infty}{q^2} \sum_{k=0}^{\infty} 
\frac{ a^k q^{k^2+k } }{\pqrfac{q^2}{k}{q^2} \pqrfac{-bq^2}{k}{q^2}}.
\end{align}
When we take $a\mapsto a/q$ and $b\mapsto b/q$ in \eqref{m.soros}, we obtain 
\cite[eq.~(1.5.1)]{AB2009}. Instead, if we take the special case $b\mapsto a/q$ in \eqref{m.soros}, we obtain the second equality of 
\cite[Entry 1.5.1]{AB2009}:
\begin{align}\label{1.5.1}
\pqrfac{-aq^2}{\infty}{q^2} &
\sum_{j=0}^{\infty} 
\frac {a^j q^{j^2} }{\pqrfac{q^2}{j}{q^2} \pqrfac{-aq^2}{j}{q^2}} \cr
=&
\pqrfac{-aq}{\infty}{q^2} \sum_{k=0}^{\infty} 
\frac{ a^k q^{k^2+k } }{\pqrfac{q^2}{k}{q^2} \pqrfac{-aq}{k}{q^2}}.
\end{align}

\subsection*{Entry 1.4.18} Entry 1.4.18 is due to Andrews, Berndt and Ramanujan \cite{AB2009}, and follows from the $a\to 0$ case of Ramanujan's transformation \eqref{1.4.1}. 

Take $a\to 0$ in 
\eqref{GB-1.4.1}, cancel the factor $(1-cq)$ from the LHS, and $(1+bq)$ from both sides. In the resulting identity take $b\mapsto b/q$, $c\mapsto a/q$, and $d\mapsto -c/q^h$, and bring the infinite products to the other side, we obtain: for $|c|<1$,
\begin{align}\label{GB-1.4.18}
\sum_{j=0}^{\infty} & \frac{\pqrfac{-c}{tj}{q^h}}  {\pqrfac{q^t}{j}{q^t} \pqrfac{aq^h}{tj}{q^h}}
 b^j q^{t{j+1\choose 2}} \cr
&=\frac{\pqrfac{-bq^t}{\infty}{q^t} \pqrfac{-c}{\infty}{q^h} }{\pqrfac{aq^h}{\infty}{q^h}} 
\sum_{k=0}^{\infty} \frac{\pqrfac{-aq^h/c}{k}{q^h}}{\pqrfac{q^h}{k}{q^h}\pqrfac{-bq^t}{hk}{q^t}}
(-c)^k  .
\end{align}
When $c=a/b$, $h=2$ and $t=1$, this reduces to \cite[Entry 1.4.18]{AB2009}: 
\begin{align}\label{1.4.18}
\sum_{j=0}^{\infty} & \frac{\pqrfac{-a/b}{j}{q^2}}  {\pqrfac{q}{j}{q} \pqrfac{aq^2}{j}{q^2}}
 b^j q^{{j+1\choose 2}} \cr
&=\frac{\pqrfac{-bq}{\infty}{q} \pqrfac{-a/b}{\infty}{q^2} }{\pqrfac{aq^2}{\infty}{q^2}} 
\sum_{k=0}^{\infty} \frac{\pqrfac{-bq^2}{k}{q^2}}{\pqrfac{q^2}{k}{q^2}\pqrfac{-bq}{2k}{q}}
\left(-\frac{a}{b}\right)^k .
\end{align}
Perhaps the $c=a/b$, $h=1$ and $t=2$ case of \eqref{GB-1.4.18} is equally pretty. 
\begin{align}\label{GB-1.4.18a}
\sum_{j=0}^{\infty} & \frac{\pqrfac{-a/b}{2j}{q}}  {\pqrfac{q^2}{j}{q^2} \pqrfac{aq}{2j}{q}}
 b^j q^{j^2+j} \cr
&=\frac{\pqrfac{-bq^2}{\infty}{q^2} \pqrfac{-a/b}{\infty}{q} }{\pqrfac{aq}{\infty}{q}} 
\sum_{k=0}^{\infty} \frac{\pqrfac{-bq}{k}{q}}{\pqrfac{q}{k}{q}\pqrfac{-bq^2}{k}{q^2}}
\left(-\frac{a}{b}\right)^k .
\end{align}

To get other transformations of a similar nature, consider the $b=0$ case of \eqref{GB-1.4.1}. 

Take $b = 0$ in 
\eqref{GB-1.4.1}, cancel the factor $(1-cq)$ from the LHS. In the resulting identity take $a\mapsto -b$,  $c\mapsto a/q$, and $d\mapsto -c/q^h$, and bring the infinite products to the other side.
We obtain: for  $|bq^t|<1$, $|c|<1$,
\begin{align}\label{GB-1.4.18-b0}
\sum_{j=0}^{\infty} & \frac{\pqrfac{-c}{tj}{q^h}}  {\pqrfac{q^t}{j}{q^t} \pqrfac{aq^h}{tj}{q^h}}
(-bq^t)^j  \cr
&=\frac{ \pqrfac{-c}{\infty}{q^h} }{\pqrfac{aq^h}{\infty}{q^h}\pqrfac{-bq^t}{\infty}{q^t}} 
\sum_{k=0}^{\infty} \frac{\pqrfac{-aq^h/c}{k}{q^h}\pqrfac{-bq^t}{hk}{q^t}}{\pqrfac{q^h}{k}{q^h}}
(-c)^k  .
\end{align}
Take the $c=a/b$, $h=2$, $t=1$ case of \eqref{GB-1.4.18-b0}  to obtain
\begin{align}\label{GB-1.4.18-b0a}
\sum_{j=0}^{\infty} & \frac{\pqrfac{-a/b}{j}{q^2}}  {\pqrfac{q}{j}{q} \pqrfac{aq^2}{j}{q^2}}
(-bq)^j  \cr
&=\frac{ \pqrfac{-a/b}{\infty}{q^2} }{\pqrfac{aq^2}{\infty}{q^2}\pqrfac{-bq}{\infty}{q}} 
\sum_{k=0}^{\infty} \frac{\pqrfac{-bq^2}{k}{q^2}\pqrfac{-bq}{2k}{q}}{\pqrfac{q^2}{k}{q^2}}
\left(-\frac{a}{b}\right)^k .
\end{align}
%Similarly,  $c=a/b$, $h=1$, $t=2$ in \eqref{GB-1.4.18-b0} yields.
%\begin{align}\label{GB-1.4.18-b0b}
%\sum_{j=0}^{\infty} & \frac{\pqrfac{-a/b}{2j}{q}}  {\pqrfac{q^2}{j}{q^2} \pqrfac{aq}{2j}{q}}
%(-bq^2)^j  \cr
%&=\frac{ \pqrfac{-a/b}{\infty}{q} }{\pqrfac{aq}{\infty}{q}\pqrfac{-bq^2}{\infty}{q^2}} 
%\sum_{k=0}^{\infty} \frac{\pqrfac{-bq}{k}{q}\pqrfac{-bq^2}{k}{q^2}}{\pqrfac{q}{k}{q}}
%\left(-\frac{a}{b}\right)^k .
%\end{align}

\subsection*{An equivalent form of Entry 1.6.5}
Consider the case $a\to 0$ of \eqref{GB-1.4.1}. Replace $c$ by $dq^{h-1}$, and then take $b\mapsto b/q$ and $d\mapsto -a$ to obtain, for $|aq^h|<1$,
\begin{align}\label{GB-1.6.5}
\sum_{j=0}^{\infty} 
\frac {b^j q^{t{j+1\choose 2}} }{\pqrfac{q^t}{j}{q^t} \left( 1+aq^{h(tj+1)} \right) }
=
\pqrfac{-bq^t}{\infty}{q^t} \sum_{k=0}^{\infty} 
\frac{ (-1)^k \left(aq^h\right)^k  }{\pqrfac{-bq^t}{hk}{q^t}} .
\end{align}
In the case that $h=1$, $t=2$ and $b=a$ this reduces to an equivalent form of Entry 1.6.5, the second last equation in the proof of  \cite[Entry 1.6.5]{AB2009}:
\begin{align}\label{1.6.5-eq}
\sum_{j=0}^{\infty} 
\frac {a^j q^{j^2+j} }{\pqrfac{q^2}{j}{q^2} \left( 1+aq^{2j+1} \right) }
=
\pqrfac{-aq^2}{\infty}{q^2} \sum_{k=0}^{\infty} 
\frac{ \left(-aq\right)^k  }{\pqrfac{-aq^2}{k}{q^2}} .
\end{align}
This identity has a combinatorial proof, given by Berndt, Kim and Yee \cite[Th. 5.7]{BKY2010}.
A very similar identity is obtained when $h=2$, $t=1$, $a\mapsto a/q$, and $b=a$ in \eqref{GB-1.6.5}:
\begin{align}\label{GB-1.6.5a}
\sum_{j=0}^{\infty} 
\frac {a^j q^{j+1\choose 2} }{\pqrfac{q}{j}{q} \left( 1+aq^{2j+1} \right) }
=
\pqrfac{-aq}{\infty}{q} \sum_{k=0}^{\infty} 
\frac{  \left(-aq\right)^k  }{\pqrfac{-aq}{2k}{q}} .
\end{align}

Similar identities are obtained when $b=0$ in \eqref{GB-1.4.1}. Set $d=c/q^{h-1}$ and in the resulting identity, relabel parameters by replacing $c$ by $-aq^{h-1}$ and $a$ by $-b$, to obtain, for $|aq^h|<1$, $|bq^t|<1$,
\begin{align}\label{GB-1.6.5e}
\sum_{j=0}^{\infty} 
\frac {\left(-bq^t\right)^j  }{\pqrfac{q^t}{j}{q^t} \left( 1+aq^{h(tj+1)} \right) }
=
\frac{1}{\pqrfac{-bq^t}{\infty}{q^t}} \sum_{k=0}^{\infty} 
\pqrfac{-bq^t}{hk}{q^t}
  \left(-aq^h\right)^k   .
\end{align}
Now take $b=a$, $h=1$ and $t=2$ to find that 
\begin{align}\label{GB-1.6.5f}
\sum_{j=0}^{\infty} 
\frac {(-aq^2)^j }{\pqrfac{q^2}{j}{q^2} \left( 1+aq^{2j+1} \right) }
=
\frac{1}{\pqrfac{-aq^2}{\infty}{q^2} }
\sum_{k=0}^{\infty} 
\pqrfac{-aq^2}{k}{q^2}
  \left(-aq\right)^k   .
\end{align}

\subsection*{The $b=0=c$ case of \eqref{GB-1.4.1}}
Take $b=0=c$ in \eqref{GB-1.4.1}, replace $d$ by $b$ and take $\pqrfac{aq^t}{\infty}{q^t}$ on the other side to obtain an extremely symmetric  transformation formula; for $|aq^t|<1$ and $|bq^h|<1$,
\begin{align}\label{GB-missing1}
\frac{1}{\pqrfac{bq^h}{\infty}{q^h}}
\sum_{j=0}^{\infty} 
\frac {\pqrfac{bq^h}{tj}{q^h} }{\pqrfac{q^t}{j}{q^t} }(aq^t)^{j} 
=
\frac{1}{\pqrfac{aq^t}{\infty}{q^t} }
\sum_{k=0}^{\infty} 
\frac{   \pqrfac{aq^t}{hk}{q^t}}{\pqrfac{q^h}{k}{q^h} } (bq^h)^k  .
\end{align}
There is no corresponding formula in Ramanujan's list appearing in Chapter 1 of \cite{AB2009}, but it is related to \cite[Cor.~1.2.2]{AB2009}, a result originally due to Andrews \cite{An1966b}. 
When $h=2$ and $t=1$, then \eqref{GB-missing1} reduces to
\begin{align}\label{GB-missing1a}
\frac{1}{\pqrfac{bq^2}{\infty}{q^2}}
\sum_{j=0}^{\infty} 
\frac {\pqrfac{bq^2}{j}{q^2} }{\pqrfac{q}{j}{q} }(aq)^{j} 
=
\frac{1}{\pqrfac{aq}{\infty}{q} }
\sum_{k=0}^{\infty} 
\frac{   \pqrfac{aq}{2k}{q}}{\pqrfac{q^2}{k}{q^2} } (bq^2)^k .
\end{align}
Compare the sum on the left hand side with that of \cite[Cor.~1.2.2]{AB2009}. To obtain Cor.~1.2.2, Andrews and Berndt apply Heine's transformation once again on the right hand side of \eqref{GB-missing1a}.

\subsection*{Summary of special cases}
So far, we have listed 13 entries that are immediate special or limiting cases of \eqref{GB-1.4.1}. One more will appear in \S\ref{sec:1.6.6}. The main special case is Entry 1.4.1 (eq.\ \eqref{1.4.1}) which is the $h=2$ and $t=1$ case of \eqref{GB-1.4.1}. The
others are: 
\begin{enumerate}
\item The case $a=1=d$. This leads to Entry 1.4.2 and Entry 1.4.5.
\item The case $b=0$ and $d\to 0$ of \eqref{GB-1.4.1}. This leads to Entry 1.4.3 and Entry 1.4.4. Note that the case $a\to 0$ and $c=0$ leads to the same identities. Other special cases include Entry 1.4.10 and Entry 1.4.11.  
\item Taking $a\to 0$ (without changing $c$) leads to Entry 1.4.18, and an equivalent form of Entry 1.6.5. See also Entry 1.6.6 in \S\ref{sec:1.6.6} below. We have also taken $b=0$ for the sake of completeness. (The $d\to 0$ and $c=0$ cases are equivalent due to the symmetry of \eqref{GB-1.4.1}.)
\item  The case $a\to 0$ and $d\to 0$. This leads to Entry 1.4.12, Entry 1.4.17, Entry 1.4.9, and a part of Entry 1.5.1.  
\item The case $b=0=c$. This leads to a new transformation formula. A special case is closely related to a useful transformation formula of Andrews in \cite[Cor.~1.2.2]{AB2009}. 
\end{enumerate}
By examining the above summary carefully, one can ask about the cases when $b=0$ followed by $a=0$,  or $c=0$ followed by $d=0$ in \eqref{GB-1.4.1}. However, in both these cases the resulting identity reduces to the $q$-binomial theorem. 

It is apparent that most of Ramanujan's identities considered here are simple limiting cases of \eqref{GB-1.4.1} where one or more parameters go to $0$.  However, there are a few that are motivated by getting a $q$-series (such as Entry 1.4.10 and Entry 1.4.11), or in getting an \lq unnatural' identity, where the factor that naturally terminates the series from below is missing. See Entry 1.4.2 (eq.\ \eqref{1.4.2}), Entry 1.4.5 (eq.\ \eqref{1.4.5}) and \eqref{1.6.5-eq}.  

\section{Entry 1.6.6 and related summations}\label{sec:1.6.6}
Entry 1.6.6 is also a special case of \eqref{GB-1.6.5},  our generalization of Entry 1.6.5 above, and so of \eqref{GB-1.4.1}. What is different here is that one can employ a special case of the $q$-binomial theorem to sum one of the series. The special case we need is \cite[eq. (II.1)]{GR90}: for $|z|<1$,
\begin{equation}\label{GR90-II.1}  
\frac{1}{\qrfac{z}{\infty}} = 
\sum_{k=0}^{\infty} 
\frac{z^k}{\qrfac{q}{k}}.
\end{equation}
Observe that when $b=-1$ and $h=1$ in \eqref{GB-1.6.5}, then using \eqref{GR90-II.1}, we obtain
\begin{align*}%\label{GB-1.6.6-proof}
\sum_{j=0}^{\infty} 
\frac {(-1)^j q^{t{j+1\choose 2}} }{\pqrfac{q^t}{j}{q^t} \left( 1+aq^{tj+1} \right) }
&=
\pqrfac{q^t}{\infty}{q^t} \sum_{k=0}^{\infty} 
\frac{ (-1)^k \left(aq\right)^k  }{\pqrfac{q^t}{k}{q^t}} \cr
&= \frac{\pqrfac{q^t}{\infty}{q^t}}{\pqrfac{-aq}{\infty}{q^t}}.
\end{align*}
Replace $a$ by $aq^{s-1}$ to re-write this identity in the form
\begin{align}\label{GB-1.6.6}
\sum_{j=0}^{\infty} 
\frac {(-1)^j q^{t{j+1\choose 2}} }{\pqrfac{q^t}{j}{q^t} \left( 1+aq^{tj+s} \right) }
= \frac{\pqrfac{q^t}{\infty}{q^t}}{\pqrfac{-aq^s}{\infty}{q^t}}.
\end{align}
In the case where $a=-1$, $s=1$ and $t=2$, this reduces to Ramanujan's \cite[Entry 1.6.6]{AB2009}, an identity we highlighted in the introduction:
\begin{align}\label{1.6.6}
\sum_{j=0}^{\infty} 
\frac {(-1)^j q^{j^2+j} }{\pqrfac{q^2}{j}{q^2} \left( 1-q^{2j+1} \right) }
= \frac{\pqrfac{q^2}{\infty}{q^2}}{\pqrfac{q}{\infty}{q^2}},
\end{align}
where the ratio of infinite products on the right hand side are equal to Ramanujan's theta function $\psi(q)$, defined as
$$\psi(q):= \sum_{k=0}^\infty q^{\frac{k(k+1)}{2}}.$$
However, if we take $a=1$, $t=1$ and $s=1$ in \eqref{GB-1.6.6}, we obtain
\begin{align}\label{GB-1.6.6a}
\sum_{j=0}^{\infty} 
\frac {(-1)^j q^{{j+1\choose 2}} }{\pqrfac{q}{j}{q} \left( 1+q^{j+1} \right) }
= \frac{\pqrfac{q}{\infty}{q}}{\pqrfac{-q}{\infty}{q}},
\end{align}
where now the products on the right hand side are (in Ramanujan's notation) $\phi(-q)$, defined by
$$\phi(-q):= \sum_{k=-\infty}^\infty (-1)^kq^{k^2}.$$
So in \eqref{GB-1.6.6} we have a common generalization of \eqref{1.6.6} and \eqref{GB-1.6.6a}.

More generally, when $b=-1$, $a\mapsto -a^h$, then \eqref{GB-1.6.5} reduces to
\begin{align*}%\label{GB-1.6.5b}
\sum_{j=0}^{\infty} 
\frac {(-1)^j q^{t{j+1\choose 2}} }{\pqrfac{q^t}{j}{q^t} \left( 1-a^hq^{h(tj+1)} \right) }
=
\pqrfac{q^t}{\infty}{q^t} 
\sum_{k=0}^{\infty} 
\frac{  \left(aq\right)^{hk}  }{\pqrfac{q^t}{hk}{q^t}} .
\end{align*}
When $h$ is a positive integer, the sum on the right consists of every $h$th term of the summand in \eqref{GR90-II.1}. There is a simple trick to compute such a sum. It uses the fact that
\begin{equation*}%\label{sum-roots-unity}
\frac{1}{h}\sum_{r=0}^{h-1} \omega_h^{rk} =
\begin{cases} 
1 & \text{ if } h\vert k \\
0 & \text{ otherwise}
\end{cases},
\end{equation*}
for $\omega_h = e^{2\pi i/h}$ or some other primitive $h$th  root of unity. Using this trick, we find that
\begin{align*}
\sum_{k=0}^{\infty} 
\frac{  \left(aq\right)^{hk}  }{\pqrfac{q^t}{hk}{q^t}} &= 
\sum_{k=0}^{\infty} 
\frac{  \left(aq\right)^{k}  }{\pqrfac{q^t}{k}{q^t}} \frac{1}{h}\sum_{r=0}^{h-1} \omega_h^{rk} \cr
&= 
\frac{1}{h} \sum_{r=0}^{h-1}
\sum_{k=0}^{\infty} 
\frac{  \left(aq\omega_h^{r} \right)^{k}  }{\pqrfac{q^t}{k}{q^t}}  \cr
&= 
\frac{1}{h} \sum_{r=0}^{h-1}
\frac{ 1 }{\pqrfac{aq\omega_h^r}{\infty}{q^t}}.
\end{align*}
So we obtain
\begin{align}\label{GB-1.6.5b}
\sum_{j=0}^{\infty} 
\frac {(-1)^j q^{t{j+1\choose 2}} }{\pqrfac{q^t}{j}{q^t} \left( 1-a^hq^{h(tj+1)} \right) }
=
\frac{1}{h} \sum_{r=0}^{h-1}
\frac{ \pqrfac{q^t}{\infty}{q^t} }{\pqrfac{aq\omega_h^r}{\infty}{q^t}}.
\end{align}
In particular when $h=2$, then $\omega_h =-1$, and
\begin{align}\label{GB-1.6.5c}
\sum_{j=0}^{\infty} 
\frac {(-1)^j q^{t{j+1\choose 2}} }{\pqrfac{q^t}{j}{q^t} \left( 1-a^2q^{2(tj+1)} \right) }
=
\frac{1}{2} 
\left(
\frac{ \pqrfac{q^t}{\infty}{q^t} }{\pqrfac{aq}{\infty}{q^t}}
+
\frac{ \pqrfac{q^t}{\infty}{q^t} }{\pqrfac{-aq}{\infty}{q^t}}
\right).
\end{align}
When $a=1$ and $t=2$, this reduces to 
\begin{align}\label{GB-1.6.5d}
\sum_{j=0}^{\infty} 
\frac {(-1)^j q^{j^2+j } }{\pqrfac{q^2}{j}{q^2} \left( 1-q^{2(2j+1)} \right) }
&=
\frac{1}{2} 
\left(
\frac{ \pqrfac{q^2}{\infty}{q^2} }{\pqrfac{q}{\infty}{q^2}}
+
\frac{ \pqrfac{q^2}{\infty}{q^2} }{\pqrfac{-q}{\infty}{q^2}}\right) \cr
&=\frac{1}{2}\left( \psi(q) + \psi(-q)\right)
.
\end{align}
The right hand side is the even part of $\psi (q)$.

\begin{Remark} Let $\omega_r = e^{2\pi i/r}$ or some other primitive $r$th  root of unity. Then we   can show that, formally,
\begin{equation*}
\frac{1}{r}\sum_{\nu =0}^{r-1} \omega_r^{-\nu s} \sum_{n=0}^\infty f(n)\omega_r^{\nu n} =
\sum_{k=0}^\infty f(rk+s)\ \frac{1}{r}\sum_{\nu=0}^{r-1} \omega_r^{\nu rk} 
=\sum_{k=0}^\infty f(rk+s).
\end{equation*}
This \lq sectioning'  process allows us to compute the sum $\sum_{k=0}^\infty f(rk+s)z^{rk+s},$ if we know the sum $\sum_{k=0}^\infty f(k)z^{k}$. 

This trick is used in the proof of Andrews' Fundamental Lemma, given in equation \eqref{Andrews-FL}. 
\end{Remark}
We have seen an example where we can sum a series after applying Heine's method.  Next, we obtain a result for multiple series by iterating Heine's method.

\section{Multibasic Andrews $q$-Lauricella transformation}\label{sec:lauricella}
We now apply Heine's method to obtain a multibasic generalization of Andrews'~\cite[(eq.~(4.1)]{An1972} transformation formula for the $q$-Lauricella function. As special cases, we obtain some generalizations of Entry 1.4.10 and of equation \eqref{GB-1.4.12}.  These results transform a multiple series to a multiple of a single series. 

For $\boldsymbol h$ and $\boldsymbol k$ vectors, we use the following notations. The notation 
$\sumk$ is used to denote the sum of the components of the vector
$k_1+k_2+\cdots +k_m.$
We use the symbol for the dot product
$$ {{\boldsymbol h}\cdot{\boldsymbol k}} = h_1k_1+h_2k_2+\cdots + h_mk_m.$$
We also use the vector $\boldsymbol {\delta}$ to denote the vector
$ (1, 2, \dots, m).$
Thus, 
$${\boldsymbol \delta}\cdot{\boldsymbol k} =\sum\limits_{r=1}^m rk_r .$$

\begin{Theorem} Suppose $m=1, 2, \dots$ is a non-negative integer.  Let $a_1, a_2, \dots , a_m$  and $b$ be complex numbers, and suppose that the denominators in \eqref{GB-qlauricella} are not zero. Further, 
let $q$, $t$, $h_1, h_2, \dots h_m$ be complex numbers, satisfying 
$|q^t|<1$, $|q^{h_r}|<1$ and $|q^{th_r}|<1$, for $r=1, 2, \dots, m$. 
 Then, for $|w|<1$, and $|z_r|<1$ (for $r=1, 2, \dots, m)$,
\begin{align}\label{GB-qlauricella}
\multisum{k}{m} \ &
\smallprod{m} \frac{\pqrfac{a_r}{k_r}{q^{h_r}}}{\pqrfac{q^{h_r}}{k_r}{q^{h_r}}}
\frac{\pqrfac{w}{ {\boldsymbol h}\cdot{\boldsymbol k}}{q^t}}
{\pqrfac{bw}{{\boldsymbol h}\cdot{\boldsymbol k}}{q^t}}
 \smallprod{m} z_r^{k_r} \cr
%\frac { \qrfac{w}{hk} }{\qrfac{bw}{hk}} 
%z^k 
& =
\frac{\pqrfac{w}{\infty}{q^t}}{\pqrfac{bw}{\infty}{q^t}}
\smallprod{m} \frac{\pqrfac{a_rz_r}{\infty}{q^{h_r}}}{\pqrfac{z_r}{\infty}{q^{h_r}}}
\sum_{j=0}^{\infty} \frac{\pqrfac{b}{j}{q^t}} {\pqrfac{q^t}{j} {q^t}}
\smallprod{m} \frac{\pqrfac{z_r}{tj}{q^{h_r}}}{\pqrfac{a_rz_r}{tj}{q^{h_r}}}
%\frac {\pqrfac{z}{j}{q^h}}{\pqrfac{az}{j}{q^h}} 
w^j.
\end{align}
\end{Theorem}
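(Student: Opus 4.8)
The plan is to run the same \lq Heine's method' that proves Theorem~\ref{th:GB-sym-Heine}, but carried out in $m$ variables simultaneously. The only thing coupling the indices $k_1,\dots,k_m$ is the factor $\pqrfac{w}{{\boldsymbol h}\cdot{\boldsymbol k}}{q^t}/\pqrfac{bw}{{\boldsymbol h}\cdot{\boldsymbol k}}{q^t}$, and the key observation is that once this factor is expanded it decouples cleanly across the $m$ sums. First I would apply \eqref{elementary1} to the coupling factor to write
\[
\frac{\pqrfac{w}{{\boldsymbol h}\cdot{\boldsymbol k}}{q^t}}{\pqrfac{bw}{{\boldsymbol h}\cdot{\boldsymbol k}}{q^t}}
=\frac{\pqrfac{w}{\infty}{q^t}}{\pqrfac{bw}{\infty}{q^t}}\,
\frac{\pqrfac{bwq^{t({\boldsymbol h}\cdot{\boldsymbol k})}}{\infty}{q^t}}{\pqrfac{wq^{t({\boldsymbol h}\cdot{\boldsymbol k})}}{\infty}{q^t}},
\]
pulling the $\boldsymbol{k}$-independent constant $\pqrfac{w}{\infty}{q^t}/\pqrfac{bw}{\infty}{q^t}$ outside the multisum. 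Then I would expand the remaining ratio of infinite products by the $q$-binomial theorem \eqref{q-bin} in base $q^t$, obtaining $\sum_{j\ge 0}\bigl(\pqrfac{b}{j}{q^t}/\pqrfac{q^t}{j}{q^t}\bigr)\bigl(wq^{t({\boldsymbol h}\cdot{\boldsymbol k})}\bigr)^{j}$.

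The crucial algebraic step is the factorization $q^{tj({\boldsymbol h}\cdot{\boldsymbol k})}=\smallprod{m}\bigl(q^{th_r j}\bigr)^{k_r}$, which distributes the exponential shift over the $m$ indices. After interchanging the order of summation so that the single sum over $j$ sits outside the multisum over $\boldsymbol{k}$, the inner multisum factors as a product of $m$ independent sums,
\[
\smallprod{m}\ \sum_{k_r\ge 0}\frac{\pqrfac{a_r}{k_r}{q^{h_r}}}{\pqrfac{q^{h_r}}{k_r}{q^{h_r}}}\bigl(z_r q^{th_r j}\bigr)^{k_r},
\]
each of which I evaluate by \eqref{q-bin}, now in base $q^{h_r}$, to $\pqrfac{a_r z_r q^{th_r j}}{\infty}{q^{h_r}}/\pqrfac{z_r q^{th_r j}}{\infty}{q^{h_r}}$. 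A final application of \eqref{elementary1} rewrites each of these as $\bigl(\pqrfac{a_r z_r}{\infty}{q^{h_r}}/\pqrfac{z_r}{\infty}{q^{h_r}}\bigr)\bigl(\pqrfac{z_r}{tj}{q^{h_r}}/\pqrfac{a_r z_r}{tj}{q^{h_r}}\bigr)$; pulling the $j$-independent infinite products $\smallprod{m}\bigl(\pqrfac{a_r z_r}{\infty}{q^{h_r}}/\pqrfac{z_r}{\infty}{q^{h_r}}\bigr)$ outside the $j$-sum yields exactly the right-hand side of \eqref{GB-qlauricella}.

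The main obstacle is bookkeeping and the justification of the rearrangement of the $(m+1)$-fold sum rather than any genuine difficulty: the scheme is a routine iteration of the one-variable argument. To justify absolute convergence and the interchange of summation I would argue exactly as in the discussion preceding Theorem~\ref{th:GB-sym-Heine}, checking that $|q^{th_r}|<1$ together with $|z_r|<1$ forces $|z_r q^{th_r j}|\le|z_r|<1$ for all $j\ge 0$, and likewise that $|q^{t({\boldsymbol h}\cdot{\boldsymbol k})}|\le 1$ with $|w|<1$ gives $|wq^{t({\boldsymbol h}\cdot{\boldsymbol k})}|<1$, so that every invocation of \eqref{q-bin} is legitimate and the summand is dominated by an absolutely summable product of geometric factors in $z_1,\dots,z_m$ and $w$. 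As a consistency check, the $m=1$ case should reduce to \eqref{GB-sym-Heine} with both sides multiplied through by $\pqrfac{w}{\infty}{q^t}/\pqrfac{bw}{\infty}{q^t}$.
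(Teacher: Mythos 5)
Your proposal is correct and follows essentially the same route as the paper: both pull the constant $\pqrfac{w}{\infty}{q^t}/\pqrfac{bw}{\infty}{q^t}$ out of the coupling factor via \eqref{elementary1}, expand the remaining ratio of infinite products by the $q$-binomial theorem \eqref{q-bin} in base $q^t$, interchange summations, evaluate the resulting $k_r$-sums by \eqref{q-bin} in base $q^{h_r}$, and finish with \eqref{elementary1}, under the same convergence justification. The only difference is bookkeeping: you interchange once and evaluate all $m$ decoupled inner sums simultaneously, whereas the paper applies Heine's method $m$ times, peeling off one index $k_r$ per pass; the underlying computation is identical.
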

\begin{Remark}
When $m=1$, then \eqref{GB-qlauricella} reduces to \eqref{GB-sym-Heine}. When $a_r\mapsto b_r$, $b\mapsto c/a$, $w\mapsto a$, and  $h_1=h_2=\cdots h_m=1=t$, then \eqref{GB-qlauricella} reduces to a transformation of Andrews for $q$-Lauricella functions 
\cite[(eq.~(4.1)]{An1972}. 
\end{Remark}
\begin{proof}
The proof of \eqref{GB-qlauricella} is a direct extension of the proof of \eqref{GB-sym-Heine}.  
For $m=1$, it reduces to Theorem \eqref{th:GB-sym-Heine}. 
When $m>1$, we apply Heine's method $m$ times. Expand the relevant products using the $q$-binomial theorem and interchange the sums one at a time. The first few steps of the proof are as follows.  
\begin{align*}%\label{GB-qlauricella}
\multisum{k}{m} \ &
\smallprod{m} \frac{\pqrfac{a_r}{k_r}{q^{h_r}}}{\pqrfac{q^{h_r}}{k_r}{q^{h_r}}}
\frac{\pqrfac{w}{ {\boldsymbol h}\cdot{\boldsymbol k}}{q^t}}
{\pqrfac{bw}{{\boldsymbol h}\cdot{\boldsymbol k}}{q^t}}
 \smallprod{m} z_r^{k_r} \cr
%\frac { \qrfac{w}{hk} }{\qrfac{bw}{hk}} 
%z^k 
=
\frac{\pqrfac{w}{\infty}{q^t}}{\pqrfac{bw}{\infty}{q^t}}
\multisum{k}{m} \ &
\smallprod{m} \frac{\pqrfac{a_r}{k_r}{q^{h_r}}}{\pqrfac{q^{h_r}}{k_r}{q^{h_r}}}
\frac{\pqrfac{bwq^{t\left(h_1k_1+\cdots+h_mk_m \right)}}{ \infty}{q^t}}
{\pqrfac{wq^{t\left(h_1k_1+\cdots+h_mk_m \right)}}{\infty}{q^t}}
 \smallprod{m} z_r^{k_r} \cr
\cr
 =
\frac{\pqrfac{w}{\infty}{q^t}}{\pqrfac{bw}{\infty}{q^t}}
\multisum{k}{m} \ &
\smallprod{m} \frac{\pqrfac{a_r}{k_r}{q^{h_r}}}{\pqrfac{q^{h_r}}{k_r}{q^{h_r}}}
\smallprod{m} z_r^{k_r} 
\sum_{j=0}^{\infty} \frac{\pqrfac{b}{j}{q^t}} {\pqrfac{q^t}{j} {q^t}}
w^{j}q^{tj\left(h_1k_1+\cdots+h_mk_m \right)}
  \cr
 =
\frac{\pqrfac{w}{\infty}{q^t}}{\pqrfac{bw}{\infty}{q^t}}
\multisum{k}{m-1} \ &
\smallprod{m-1} \frac{\pqrfac{a_r}{k_r}{q^{h_r}}}{\pqrfac{q^{h_r}}{k_r}{q^{h_r}}}
\smallprod{m-1} z_r^{k_r} \cr
\times \sum_{j=0}^{\infty} \frac{\pqrfac{b}{j}{q^t}} {\pqrfac{q^t}{j} {q^t}} &
w^{j}q^{tj\left(h_1k_1+\cdots+h_{m-1}k_{m-1} \right)} 
 \sum_{k_m\geq 0} 
\frac{\pqrfac{a_m}{k_m}{q^{h_m}}}{\pqrfac{q^{h_m}}{k_m}{q^{h_m}}}
\left(z_mq^{tjh_m}\right)^{k_m}
  \cr
=
\frac{\pqrfac{w}{\infty}{q^t}}{\pqrfac{bw}{\infty}{q^t}}
 \multisum{k}{m-1} &
\smallprod{m-1} \frac{\pqrfac{a_r}{k_r}{q^{h_r}}}{\pqrfac{q^{h_r}}{k_r}{q^{h_r}}}
\smallprod{m-1} z_r^{k_r} \cr
\times \sum_{j=0}^{\infty} & \frac{\pqrfac{b}{j}{q^t}} {\pqrfac{q^t}{j} {q^t}}
w^{j}q^{tj\left(h_1k_1+\cdots+h_{m-1}k_{m-1} \right)} 
\frac{\pqrfac{a_mz_mq^{tjh_m}}{\infty}{q^{h_m}}}{\pqrfac{z_mq^{tjh_m}}{\infty}{q^{h_m}}}
  \cr
=
\frac{\pqrfac{w}{\infty}{q^t}}{\pqrfac{bw}{\infty}{q^t}} 
\frac{\pqrfac{a_mz_m}{\infty}{q^{h_m}}}{\pqrfac{z_m}{\infty}{q^{h_m}}} &
\multisum{k}{m-1} \ 
\smallprod{m-1} \frac{\pqrfac{a_r}{k_r}{q^{h_r}}}{\pqrfac{q^{h_r}}{k_r}{q^{h_r}}}
\smallprod{m-1} z_r^{k_r} \cr
&\times 
\sum_{j=0}^{\infty} \frac{\pqrfac{b}{j}{q^t}} {\pqrfac{q^t}{j} {q^t}}
\frac{\pqrfac{z_m}{tj}{q^{h_m}}}{\pqrfac{a_mz_m}{tj}{q^{h_m}}}
w^{j}q^{tj\left(h_1k_1+\cdots+h_{m-1}k_{m-1} \right)} .
%\cr
%& =
%\frac{\pqrfac{w}{\infty}{q^t}}{\pqrfac{bw}{\infty}{q^t}}
%\smallprod{p} \frac{\pqrfac{a_rz_r}{\infty}{q^{h_r}}}{\pqrfac{z_r}{\infty}{q^{h_r}}}
%\sum_{j=0}^{\infty} \frac{\pqrfac{b}{j}{q^t}} {\pqrfac{q^t}{j} {q^t}}
%\smallprod{p} \frac{\pqrfac{z_r}{tj}{q^{h_r}}}{\pqrfac{a_rz_r}{tj}{q^{h_r}}}
%%\frac {\pqrfac{z}{j}{q^h}}{\pqrfac{az}{j}{q^h}} 
%w^j
\end{align*}
So far, the steps are identical to the proof of Theorem \ref{th:GB-sym-Heine}, with the sum indexed by $k_m$ replacing the sum indexed by $k$ in the earlier proof. Repeating these steps $m-1$ times, we obtain the required single sum indexed by $j$ on the right hand side of \eqref{GB-qlauricella}. 

The convergence considerations in \S\ref{sec:heine-method} extend to both the series in this theorem, and to the interchange of summations required in the proof. 
\qed
\end{proof}

Next, we indicate generalizations of a few special cases of results from \S \ref{sec:ramanujan2p1}, to hint at the many possibilities available.  

First take $a_r\mapsto c_rq/d_r$, $b\mapsto -bq/a$, $w\mapsto aq^t$ and $z_r\mapsto d_rq^{h_r}$ in \eqref{GB-qlauricella} to obtain a generalization of \eqref{GB-1.4.1}:
\begin{align}\label{GB-qlauricella-1.4.1}
\frac{\pqrfac{aq^t}{\infty}{q^t}}{\pqrfac{-bq^{t+1}}{\infty}{q^t}}&
\smallprod{m} \frac{\pqrfac{c_rq^{h_r+1}}{\infty}{q^{h_r}}}{\pqrfac{d_rq^{h_r}}{\infty}{q^{h_r}}}
\sum_{j=0}^{\infty} \frac{\pqrfac{-bq/a}{j}{q^t}} {\pqrfac{q^t}{j} {q^t}}
\smallprod{m} \frac{\pqrfac{d_rq^{h_r}}{tj}{q^{h_r}}}{\pqrfac{c_rq^{h_r+1}}{tj}{q^{h_r}}}
%\frac {\pqrfac{z}{j}{q^h}}{\pqrfac{az}{j}{q^h}} 
\left(aq^t\right)^j\cr
 =
&\multisum{k}{m} 
\smallprod{m} \frac{\pqrfac{c_rq/d_r}{k_r}{q^{h_r}}}{\pqrfac{q^{h_r}}{k_r}{q^{h_r}}}
\frac{\pqrfac{aq^t}{{\boldsymbol h}\cdot{\boldsymbol k}}{q^t}}
{\pqrfac{-bq^{t+1}}{{\boldsymbol h}\cdot{\boldsymbol k}}{q^t}}
q^{{\boldsymbol h}\cdot{\boldsymbol k}}
 \smallprod{m} d_r^{k_r} .
\end{align}

Next, we obtain four generalizations of Ramanujan's Entry 1.4.10, equation \eqref{1.4.10}. 

First take $b=0$ and $d_r\to 0$, for $r=1, 2, \dots, m$ in \eqref{GB-qlauricella-1.4.1}. Further, replace $c_r$ by $1/q$ for each $r$, and take $a=1$. In the resulting identity, take $t=1$, and
$h_r=r$ for $r=1, 2, \dots, m$ to obtain
\begin{align}\label{GB-qlauricella-1.4.10a}
\sum_{j=0}^{\infty} & \frac{q^j} {\pqrfac{q}{j} {q}}
\smallprod{m} \frac{1}{\pqrfac{q^{r}}{j}{q^{r}}}
  =
\frac{1}{\pqrfac{q}{\infty}{q}} 
 \smallprod{m} \frac{1}{\pqrfac{q^r}{\infty}{q^{r}}}\cr
& \times\multisum{k}{m} 
\smallprod{m} \frac{ 1 }{\pqrfac{q^r}{k_r}{q^{r}}}
\pqrfac{q}{{\boldsymbol \delta}\cdot{\boldsymbol k}}{q}  (-1)^{|{\boldsymbol k}|}
q^{\sum\limits_{r=1}^m r{k_r+1\choose 2}}.
%q^{\sum\limits_{r=1}^p h_rk_r}
\end{align}
%where $${\boldsymbol \delta}\cdot{\boldsymbol k} =\sum\limits_{r=1}^p rk_r .$$

Next, again take $b=0$ and $d_r\to 0$, for $r=1, 2, \dots, m$ in \eqref{GB-qlauricella-1.4.1}. But now take $m=n$, and replace $h_r$ by $n$ for all $r$. Further, set $c_r = cq^{r-2}$ for $r=1, 2, \dots, n$, and invoke \eqref{GRI.27} to simplify some of the products. Finally, take $a=c=1$ to obtain the following generalization of Entry 1.4.10:
\begin{align}\label{GB-qlauricella-1.4.10b}
\sum_{j=0}^{\infty}& \frac{q^{tj}} {\pqrfac{q^t}{j} {q^t} \pqrfac{q^{n}}{ntj}{q}}
 =
\frac{1}{\pqrfac{q^t}{\infty}{q^t} \pqrfac{q^n}{\infty}{q} } 
\cr
& \times\multisum{k}{n} 
\smallprod{n} \frac{ 1 }{\pqrfac{q^n}{k_r}{q^{n}}}
\pqrfac{q^t}{n|{\boldsymbol k}|}{q^t} (-1)^{|{\boldsymbol k}|}
q^{\sum\limits_{r=1}^n (r-1)k_r + n\sum\limits_{r=1}^n {k_r+1\choose 2}}.
%q^{\sum\limits_{r=1}^p h_rk_r}
\end{align}

The third generalization of Entry 1.4.10 is obtained as follows. In equation \eqref{GB-qlauricella-1.4.1}, take $a\to 0$ and $c_r=0$ for $r=1, 2, \dots, m$. Now take $b\mapsto -1/q$, $d_r=1,$ for $r=1, 2, \dots, m$. In the resulting transformation, once again take $t=1$, and $h_r=r$, for $r=1, 2, \dots, m$, and obtain
\begin{align}\label{GB-qlauricella-1.4.10c}
\multisum{k}{m} &
\frac{1}{\pqrfac{q}{{\boldsymbol \delta}\cdot{\boldsymbol k}}{q}}
\smallprod{m} \frac{ 1 }{\pqrfac{q^r}{k_r}{q^{r}}}
q^{{\boldsymbol \delta}\cdot{\boldsymbol k}} \cr
&= 
\frac{1}{\pqrfac{q}{\infty}{q}} 
 \smallprod{m} \frac{1}{\pqrfac{q^r}{\infty}{q^{r}}}
\sum_{j=0}^{\infty} 
(-1)^{j}
q^{ j+1\choose 2}
\prod\limits_{r=2}^{m} \pqrfac{q^{r}}{j}{q^{r}}.
%q^{\sum\limits_{r=1}^p h_rk_r}
\end{align}

The fourth generalization of Entry 1.4.10 is as follows. Again, take $a\to 0$ and $c_r=0$ for $r=1, 2, \dots, m$. But now take $m=n$, and replace $h_r$ by $n$ for all $r$. Further, replace $b$ by $-b/q$, and  set $d_r = dq^{r-1}$ for $r=1, 2, \dots, n$. Again, we invoke \eqref{GRI.27} to simplify some of the products, and take $b = d = 1$ to obtain the following generalization of Entry 1.4.10.
\begin{align}\label{GB-qlauricella-1.4.10d}
\multisum{k}{n} &
\frac{1}{\pqrfac{q^t}{n{| {\boldsymbol k} |}}{q^t}}
\smallprod{n} \frac{ 1 }{\pqrfac{q^n}{k_r}{q^{n}}}
q^{n{| {\bold k} |}+\sum\limits_{r=1}^n (r-1)k_r} \cr
&= 
\frac{1}{\pqrfac{q^t}{\infty}{q^t} \pqrfac{q^n}{\infty}{q} } 
 \sum_{j=0}^{\infty} 
\frac{ \pqrfac{q^{n}}{ntj}{q} } {\pqrfac{q^{t}}{j}{q^{t}}}
(-1)^{j}
q^{t{ j+1\choose 2}}.
%q^{\sum\limits_{r=1}^p h_rk_r}
\end{align}

Next, we give a generalization of the generalization of Entry 1.4.12 given in equation \eqref{GB-1.4.12}. To this end take the limit as $a\to 0$ and $d_r\to 0$, for $r=1, 2, \dots, m$ in \eqref{GB-qlauricella-1.4.1}. Now replace $c_r$ by $-a_r/q$ and $b$ by $b/q$ to obtain
\begin{align}\label{GB-qlauricella-1.4.12a}
\smallprod{m} \pqrfac{-a_rq^{h_r}}{\infty}{q^{h_r}} 
&\sum_{j=0}^{\infty} \frac{1} {\pqrfac{q^t}{j} {q^t}}
\smallprod{m} \frac{1}{\pqrfac{-a_rq^{h_r}}{tj}{q^{h_r}}}
%\frac {\pqrfac{z}{j}{q^h}}{\pqrfac{az}{j}{q^h}} 
b^jq^{t{j+1\choose 2}}\cr
 =
\pqrfac{-bq^t}{\infty}{q^t}
&\multisum{k}{m} 
\smallprod{m} \frac{a_r^{k_r} }{\pqrfac{q^{h_r}}{k_r}{q^{h_r}}}
\frac{1}
{\pqrfac{-bq^t}{{\boldsymbol h}\cdot{\boldsymbol k}}{q^t}}
q^{\sum\limits_{r=1}^m h_r{k_r+1\choose 2}}.
\end{align}
Finally, we present a special case of \eqref{GB-qlauricella-1.4.12a} with $m=n$ and where $h_r=n$, for all $r$. Take $a_r=aq^{r-1}$ for all $r$ and simplify some of the products using \eqref{GRI.27} to obtain the following generalization of Entry 1.4.12:
\begin{align}\label{GB-qlauricella-1.4.12b}
& \pqrfac{-aq^n}{\infty}{q} 
\sum_{j=0}^{\infty} \frac{b^j} {\pqrfac{q^t}{j} {q^t} \pqrfac{-aq^n}{ntj}{q} }
q^{t{j+1\choose 2}}
 =
\pqrfac{-bq^t}{\infty}{q^t} \cr
&\times \multisum{k}{n} 
\smallprod{n} \frac{1 }{\pqrfac{q^n}{k_r}{q^n}}
\frac{a^{|{\boldsymbol k}|}}
{\pqrfac{-bq^t}{ n|{\boldsymbol k}|}{q^t}}
q^{\sum\limits_{r=1}^n (r-1)k_r+ n\sum\limits_{r=1}^n {k_r+1\choose 2}}.
\end{align}

Perhaps this is a suitable place to close our study, at equation number 60 of this paper.

\section{Closing remarks}\label{sec:conclusion}
We have seen that a minor modification of Andrews' earlier identity led to so many identities similar to Ramanujan's entries. 
%Recall that most of Ramanujan's entries are from just one section, namely \S1.4 of \cite{AB2009}. 
Clearly, it is a good idea to study Ramanujan's Notebooks, edited by Berndt, and the Lost Notebook, edited by Andrews and Berndt.  We conclude with a few remarks regarding Ramanujan's transformations and possible directions of further study.  

Entry 1.4.1 is a key identity of Ramanujan, and deserves more importance than given in \cite{AB2009}. Recall that Entry 1.4.1 is the $h=2$ and $t=1$ case of \eqref{GB-1.4.1}. 
Many of Ramanujan's transformations considered here are immediate corollaries of Entry 1.4.1. These include Entries 1.4.2, 1.4.3, 1.4.4. 1.4.5 and 1.4.18. The special cases considered are the obvious ones, by letting one or more parameters equal to $0$, or if necessary, taking limits to $0$. Even the equivalent case of Entry 1.6.5 can be derived from Entry 1.4.1, by taking $d\to0$. 

Entries 1.4.9, 1.4.10, 1.4.11 and Entry 1.5.1 follow from the $h=1=t$ case of \eqref{GB-Heine} or in other words, from Heine's transformation \eqref{Heine-original}. Since Heine's transformation formula appears in earlier notebooks of Ramanujan, why do these formulas show up here, in his later work? An explanation is that Ramanujan was searching for identities for series that look like or involve theta functions. So these entries, and Entry 1.6.6 fit in well here. 

Next note that Entry 1.4.12 is obtained from the $h=1$ case of \eqref{GB-Heine},  while Entry 1.4.17 requires the the $h=t$ case of \eqref{GB-Heine}. Of the entries studied here, these two are the only ones that require something more than \eqref{Heine-original} and \eqref{1.4.1} (the two identities noted by Ramanujan in his notebooks).

Many of Ramanujan's identities have been studied from a partition theoretic perspective by Berndt, Kim and Yee \cite{BKY2010}, including  \eqref{1.4.12}, \eqref{1.4.17}, \eqref{1.4.9}, \eqref{1.5.1} and \eqref{1.6.5-eq}.   We expect that many of the identities considered here have a similar interpretation.

Finally, we note that
Heine's method generalizes to multiple series related to root systems. 
We can combine the multidimensional $q$-binomial theorems (given by, for example, 
Milne \cite{Milne1997} and Gustafson and Krattenthaler \cite{GK1996}) to obtain extensions of \eqref{GB-Heine} and \eqref{GB-qlauricella}.   We hope to present these elsewhere. 

%\acknowledgement{We thank Professor George Andrews and Professor Christian Krattenthaler for many suggestions,  pointers to useful references, and helpful discussions. }

\subsubsection*{Acknowledgements}{We thank Professor George Andrews and Professor Christian Krattenthaler for many suggestions,  pointers to useful references, and helpful discussions. }

%\bibliography{../../../references}{}
%\bibliographystyle{spmpsci}
%Generated using Bibtex and included in file. 
%\end{document}

\end{document}